\definecolor{darkgreen}{rgb}{0,0.5,0}
\newtheorem{theorem}{Theorem}[section]
\newtheorem{lemma}[theorem]{Lemma}
\newtheorem{proposition}[theorem]{Proposition}
\theoremstyle{definition}
\newtheorem{definition}[theorem]{Definition}
\theoremstyle{remark}
\newtheorem{remark}[theorem]{Remark}
\newtheorem{example}[theorem]{Example}
\numberwithin{equation}{section}
\newcommand{\norm}[1]{\left\Vert#1\right\Vert}
\newcommand{\abs}[1]{\left\vert#1\right\vert}
\newcommand{\set}[1]{\left\{#1\right\}}
\newcommand{\R}{\mathbb R}
\newcommand{\N}{\mathbb N}
\newcommand{\bfx}{\bm{x}}
\newcommand{\bfzero}{\bm{0}}
\newcommand{\bfone}{\bm{1}}
\newcommand{\bfX}{\bm{X}}
\newcommand{\bfU}{\bm{U}}
\newcommand{\bfZ}{\bm{Z}}
\newcommand{\bfy}{\bm{y}}
\newcommand{\bfY}{\bm{Y}}
\newcommand{\bfV}{\bm{V}}
\newcommand{\bfzeta}{\bm{\zeta}}
\newcommand{\bfeta}{\bm{\eta}}
\newcommand{\bfxi}{\bm{\xi}}
\newcommand{\barE}{\bar E^-[0,1]}
\newcommand{\barC}{\bar C^-[0,1]}
\DeclareMathOperator{\PsiO}{\Psi}
\begin{document}

\title{On Max-Stable Processes and the Functional $D$-Norm}%
\author[Aulbach et al.]{Stefan Aulbach, Michael Falk and Martin Hofmann}

\address{\mbox{ } \newline University of W\"{u}rzburg \newline
Institute of Mathematics\newline  Emil-Fischer-Str. 30\newline 97074 W\"{u}rzburg\newline Germany\newline hofmann.martin@mathematik.uni-wuerzburg.de}
\thanks{The first author was supported by DFG Grant FA 262/4-1.}

\begin{abstract}
We introduce some mathematical framework for functional extreme
value theory and provide basic definitions and tools. In
particular we introduce a functional domain of attraction approach
for stochastic processes, which is more general than the usual one
based on weak convergence.

The distribution function $G$ of a continuous max-stable process
on $[0,1]$ is introduced and it is shown that $G$ can be
represented via a norm on functional space, called $D$-norm. This
is in complete accordance with the multivariate case and leads to
the definition of functional generalized Pareto distributions (GPD) $W$.
These satisfy $W=1+\log(G)$ in their upper tails, again in
complete accordance with the uni- or multivariate case.

Applying  this framework to copula processes we derive
characterizations of the domain of attraction condition for copula
processes in terms of tail equivalence with a functional GPD.

$\delta$-neighborhoods of a functional GPD are introduced and it
is shown that these are characterized by a polynomial rate of
convergence of functional extremes, which is well-known in the
multivariate case.
\end{abstract}

\subjclass{Primary 60G70}%
\keywords{Extreme value process, functional $D$-norm, functional domain of attraction, copula process, generalized Pareto process, Takahashi's Theorem, spectral decomposition, rate of convergence of functional extremes}%

\maketitle

\section{Introduction}
Since the publication of the pathbreaking articles by  Pickands \cite{pick75} and Balkema and de Haan \cite{balh74} extreme value theory (EVT) has undergone a fundamental change. Instead of investigating the maxima in a set of observations, the focus is now on exceedances above a high threshold. The key result obtained in the above articles is the fact that the maximum of $n$ iid univariate observations, linearly standardized, follows an extreme value distribution (EVD) as $n$ increases if, and only if, the exceedances above an increasing threshold follow a generalized Pareto distribution (GPD). The multivariate analogon is due to Rootz\'{e}n and Tajvidi \cite{roott06}. For a recent account of multivariate EVT and GPD we refer to Falk et al. \cite{fahure10}.

An even more complex setup is
demanded by \textit{functional} EVT, which investigates maxima (taken pointwise) of
stochastic processes, as initiated by de Haan \cite{dehaan84} and de Haan and Pickands
\cite{dehap86}. We refer to de Haan and Ferreira \cite{dehaf06} for a detailed presentation
of up-to-date theory.

In particular de Haan and Lin \cite{dehal01}  considered weak convergence of the maximum of $n$ iid stochastic processes, linearly standardized, towards a max-stable process in the space $C[0,1]$ of continuous functions on $[0,1]$, topologized by the $\sup-$norm, and provided a domain of attraction condition. This condition consists, essentially, of the ordinary univariate weak convergence of the marginal maxima to a univariate EVD together with weak convergence of the corresponding copula process in functional space.

This is in accordance with multivariate EVT, where it is well-known that the maximum (taken
componentwise) of $n$ iid random vectors converges weakly to a multivariate EVT if, and
only if, this is true for the univariate maxima together with convergence of the
corresponding copulas (Deheuvels \cite{deheu78}, \cite{deheu84}, Galambos \cite{gal87}).

In the present paper we develop a framework for a functional domain of attraction theory, which is in even higher conformity with the multivariate case.

This paper is organized as follows. In Section
\ref{sec:extreme_value_processes} we introduce some mathematical
framework for functional EVT and provide basic
definitions and tools.
In particular we give a characterization of the distribution of max-stable processes via a norm on $E[0,1]$, the space of bounded functions on $[0,1]$ which have finitely many discontinuities. This norm is called $D$-norm.
In Section \ref{sec:FDA} we introduce a functional
domain of attraction approach for stochastic processes which is
more general than the usual one based on weak convergence but the introduced type of convergence is more restrictive than convergence of the finite dimensional distributions and more restrictive than hypoconvergence (see, e.g., Molchanov \cite{mol05}). The results of the foregoing sections are applied in Subsection \ref{sec:domain_of_attraction_for_copula_processes} to derive characterizations of the domain of
attraction condition for copula processes. The idea of a functional GPD is introduced in Section \ref{sec:functional_GPD}
and, finally, well-known results of the multivariate case are carried over, particularly $\delta-$neighborhoods of a functional
 GPD are considered in Subsection \ref{sec:Spectral_delta_Neighborhood_of_a_standard_GPP}.

To improve the readability of this paper we use bold face such as $\bfxi$, $\bfY$ for
stochastic processes and default font $f$, $a_n$ etc. for nonstochastic functions.
Operations on functions such as $\bfxi<f$ or $(\bfxi-b_n)/a_n$ are meant pointwise. The
usual abbreviations \textit{df, fidis, iid} and \textit{rv} for the terms \textit{distribution
function, finite dimensional distributions, independent and identically distributed} and
\textit{random variable}, respectively,  are used.

\section{Max-Stable Processes in $C[0,1]$}\label{sec:extreme_value_processes}

A \emph{max-stable process} (MSP) $\bfzeta=\left(\zeta_t\right)_{t\in[0,1]}$  realizing in $C[0,1]:=\{f:[0,1]\to\R:\ f \textrm{ continuous}\}$, equipped with the sup-norm $\norm f_\infty=\sup_{t\in[0,1]}\abs{f(t)}$, is a stochastic process with the characteristic property that its distribution is max-stable, i.e., $\bfzeta$ has the same distribution as $\max_{1\leq i\leq n}(\bfzeta_i-b_n)/a_n$ for independent copies $\bfzeta_1,\bfzeta_2,\dots$ of $\bfzeta$ and some $a_n,b_n\in C[0,1],\,a_n>0$, $n\in\N$ (c.f. de Haan and Ferreira  \cite{dehaf06}), i.e.,
\begin{equation}
\bfzeta =_D \max_{1\leq i\leq n}(\bfzeta_i-b_n)/a_n,
\label{DefinitionMaxStableProcess}
\end{equation}
the maxima being taken componentwise. In particular, $\zeta(t)$ is a max-stable real valued rv for every $t\in[0,1]$, i.e. its distribution has for some $a(t)>0$ and $b(t), \gamma(t)\in\R$ a von Mises representation (cf. Falk et al. \cite{fahure10}, de Haan and Ferreira \cite{dehaf06})
\begin{equation}
P\left(\frac{\bfzeta(t)-b(t)}{a(t)}\leq x\right)=: F_{\gamma(t)}(x)=\exp\left(-(1+\gamma(t) x)^{-1/\gamma(t)}\right),\quad \gamma x\geq -1,
\label{eq:vonMisesRep}
\end{equation}
$\gamma\in\R$, where
\begin{equation*}
F_{\gamma(t)}(x)=\left\{ \begin{array}{ll}
    0 & \text{for } \gamma(t)>0 \text{ and } x\leq -1/\gamma(t),\\
    1 & \text{for } \gamma(t)<0 \text{ and } x\geq -1/\gamma(t),\\
\exp\left(-\exp(-x)\right) &\text{for } \gamma(t)=0 \text{ and } x\in\R.
\end{array}\right.
\end{equation*}

 It was shown in Gin\'{e} et al. \cite{ginhv90} that for a continuous max-stable process those norming constants $a(t),b(t), \gamma(t)$ have to be continuous in $t\in[0,1]$.

 In the finite-dimensional case, the characterization of the max-stable distributions is typically done by characterizing some standard case (with certain margin restrictions) and reaching all other cases by (margin) transformation. Due to the results in Gin\'{e} et al. \cite{ginhv90} this is also possible for max-stable processes, so we can proceed in an analogous way.

\subsection{Standard Max-Stable Processes and the $D$-Norm in Function Spaces}\label{sec:standardMSPandD_Norm}

We call a process $\bfeta$ which realizes in $C[0,1]$ a
\emph{standard} MSP, if it is an MSP with standard negative
exponential (one-dimensional) margins, $P(\eta_t\le x)=\exp(x)$,
$x\le 0$, $t\in[0,1]$.

According to Gin\'{e} et al. \cite{ginhv90} and de Haan and Ferreira \cite{dehaf06}, a process $\bfxi$ in $C[0,1]$ is called a \emph{simple} MSP, if it is an MSP with standard Fr\'{e}chet (one-dimensional)
margins, $P(\xi_t\le x)=\exp(-1/x)$, $x> 0$, $t\in[0,1]$. We will see that each simple MSP $\bfxi$ can be transformed to a standard MSP $\bfeta$ by just transforming the univariate margins $\eta_t:=-1/\xi_t$, $0\le t\le 1$, and, vice versa, $\xi_t:=-1/\eta_t$. With this one-to-one correspondence one might consider the spaces of simple MSP and standard MSP as \emph{dual spaces}.

A crucial observation is the fact that neither a simple MSP $\bfxi$ nor a standard MSP $\bfeta$ attains the value $0$ (with probability one), which is the content of the following two auxiliary results, which are of interest of their own. Lemma \ref{simpleMSPnotzero} was already established by Gin\'{e} et al. \cite{ginhv90} using the theory of random sets. Furthermore, Theorem 9.4.1 in \cite{dehaf06} contains this assertion, too, proven by elementary probabilistic arguments.

\begin{lemma}\label{standardMSPnotzero}
Let $K$ be a compact subset of $[0,1]$ and let $\bfeta_K=(\eta_t)_{t\in K}$ be a max-stable process on $K$ with standard negative exponential margins, which realizes  in the space $\bar C^-(K):=\{f:K\to(-\infty,0],\,f \mbox{ is continuous}\}$ of nonpositive continuous functions on $K$. Then we have
\[
P\left(\max_{t\in K}\eta_t<0\right)=1.
\]
\end{lemma}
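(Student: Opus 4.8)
The plan is to reduce the statement to a statement about a single point and then use max-stability to force the probability to be $0$ or $1$. First I would fix $s \in K$ and consider $p := P(\eta_s < 0)$. Since $\eta_s$ has a standard negative exponential distribution, $P(\eta_s < 0) = 1$, so pointwise the process is strictly negative with probability one; the issue is only whether this persists \emph{simultaneously} over the uncountable set $K$. Because $\bfeta_K$ realizes in $\bar C^-(K)$, the map $t \mapsto \eta_t$ is (a.s.) continuous on the compact set $K$, hence $\max_{t \in K} \eta_t$ is a well-defined random variable and $\{\max_{t\in K}\eta_t < 0\}$ is measurable. Let $q := P(\max_{t \in K}\eta_t < 0) = P(\eta_t < 0 \text{ for all } t \in K)$; the goal is $q = 1$.

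Next I would exploit max-stability. By \eqref{DefinitionMaxStableProcess} applied on $K$ (with standard negative exponential margins the norming constants are $a_n \equiv 1/n$, $b_n \equiv 0$, since $\exp(nx/n)^{\,?}$ — more precisely $P(\eta_t \le x)^n = \exp(nx) = P(n^{-1}\eta_t \le x)$), for independent copies $\bfeta^{(1)}_K,\dots,\bfeta^{(n)}_K$ of $\bfeta_K$ we have
\[
\bfeta_K =_D \frac1n \max_{1 \le i \le n} \bfeta^{(i)}_K,
\]
the maximum taken pointwise. Now evaluate the event that the whole path is strictly negative on $K$. The path $\frac1n\max_{i}\bfeta^{(i)}_K$ is everywhere $<0$ on $K$ if and only if \emph{every} copy $\bfeta^{(i)}_K$ is everywhere $<0$ on $K$, because a pointwise maximum of continuous nonpositive functions is strictly negative at a point precisely when each summand is. Hence
\[
q = P\Bigl(\max_{t\in K}\tfrac1n\max_{1\le i\le n}\eta^{(i)}_t < 0\Bigr) = \prod_{i=1}^n P\Bigl(\max_{t\in K}\eta^{(i)}_t<0\Bigr) = q^n
\]
for every $n \in \N$. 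Therefore $q \in \{0,1\}$.

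It remains to rule out $q = 0$. Suppose $q = 0$, i.e. almost surely the continuous path $\bfeta_K$ attains the value $0$ somewhere on $K$; write $Z := \max_{t\in K}\eta_t$, so $Z \le 0$ always and $Z = 0$ a.s. under this assumption. I would derive a contradiction from the marginal constraint: for any fixed $s\in K$, $P(\eta_s < 0) = 1$, so $\eta_s < 0$ a.s., yet $Z = 0$ a.s.; this alone is not immediately contradictory since the zero can be attained at a random location. Instead, the cleanest route is a covering argument combined with the above $0$--$1$ dichotomy applied to finite subsets. For a finite set $F = \{t_1,\dots,t_k\} \subset K$, the same max-stability computation gives $P(\max_{t\in F}\eta_t < 0) \in \{0,1\}$, and since each $\eta_{t_j} < 0$ a.s., a finite intersection yields $P(\max_{t\in F}\eta_t < 0) = 1$. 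Thus the process avoids $0$ on every finite subset a.s. To pass to all of $K$ I would invoke continuity: choose a countable dense subset $D \subset K$ (possible since $K$ is a compact metric space, hence separable), so that $P(\eta_t < 0 \text{ for all } t \in D) = 1$ as a countable intersection of full-probability events, and then use a.s.-continuity of $t\mapsto\eta_t$ together with compactness of $K$ to conclude that $\max_{t\in K}\eta_t = \sup_{t\in D}\eta_t$; but this last identity is delicate because $\sup$ over a dense set of a continuous function equals the max over the whole set, whereas we need the \emph{strict} inequality to survive, which it need not under a mere supremum. So the honest fix is: on the full-probability event where $t\mapsto\eta_t$ is continuous and $\eta_t<0$ for all $t\in D$, suppose $\eta_{t_0} = 0$ for some $t_0 \in K$; then by continuity $\eta$ is close to $0$ near $t_0$, and picking $t_j \in D$ with $t_j \to t_0$ gives $\eta_{t_j} \to 0$ — which is \emph{not} a contradiction by itself.

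The genuine obstacle, therefore, is exactly this last step, and it is resolved not by density but by a stronger use of max-stability on a shrinking neighborhood. I would argue as follows: fix $t_0\in K$ and a decreasing sequence of relatively open neighborhoods $U_m \downarrow \{t_0\}$ in $K$; by the $0$--$1$ dichotomy (applied with $K$ replaced by $\overline{U_m}$, which is again compact and on which $\bfeta$ restricts to a standard negative-exponential MSP realizing in $\bar C^-(\overline{U_m})$), we get $q_m := P(\max_{t\in \overline{U_m}}\eta_t < 0) \in \{0,1\}$, and $q_m$ is nondecreasing in $m$. If $q_m = 0$ for all $m$, then $P(\eta_{t_0} = 0) \ge P(\bigcap_m \{\max_{\overline{U_m}}\eta_t = 0\}) = \lim_m q_m^c \ge$ ... — and since $\max_{\overline{U_m}}\eta_t \downarrow \eta_{t_0}$ a.s. by continuity, $\{\max_{\overline{U_m}}\eta_t = 0 \ \forall m\}$ has probability $\lim(1-q_m) = 1$, forcing $\eta_{t_0} = 0$ a.s., contradicting $P(\eta_{t_0}<0)=1$. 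Hence $q_m = 1$ for some $m$, i.e. $P(\eta_t < 0 \text{ for all } t \in \overline{U_m}) = 1$; covering the compact $K$ by finitely many such neighborhoods $\overline{U_{m}^{(1)}},\dots,\overline{U_{m}^{(l)}}$ and intersecting the corresponding full-probability events yields $P(\max_{t\in K}\eta_t < 0) = 1$, which is the claim. I expect the main subtlety to be making the neighborhood/covering bookkeeping precise — in particular justifying that the restriction of $\bfeta_K$ to a compact sub-neighborhood is still a standard negative-exponential MSP (immediate from the definition, since \eqref{DefinitionMaxStableProcess} is inherited by restriction), and that $\max_{\overline{U_m}}\eta_t \to \eta_{t_0}$ a.s. (immediate from a.s. continuity). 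With those in hand the argument closes.
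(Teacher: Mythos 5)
Your proof is correct, and it shares the paper's two-step skeleton --- a zero--one dichotomy extracted from max-stability, followed by a localization to a single point where the standard negative exponential margin (so $P(\eta_{t_0}=0)=0$) produces a contradiction --- but you implement both steps differently. For the dichotomy the paper scales thresholds: it shows $P\left(\max_{t\in[a,b]\cap K}\eta_t\le -\varepsilon/n\right)=P\left(\max_{t\in[a,b]\cap K}\eta_t\le -\varepsilon\right)^{1/n}$ and lets $n\to\infty$, then uses continuity from below; you instead note that the event ``the path is strictly negative everywhere'' factorizes over the $n$ independent copies, giving $q=q^n$ directly, which is slicker and avoids the $\varepsilon/n$ device. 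For the localization the paper argues by contradiction via bisection of $[0,1]$, producing nested intervals shrinking to a point $t_0\in K$ on which the maximum vanishes a.s.; you argue directly: for each $t_0\in K$ either some closed neighborhood $\overline{U_m}$ carries the full-probability event $\{\eta_t<0 \text{ for all } t\in\overline{U_m}\}$, or else $\max_{t\in\overline{U_m}}\eta_t\downarrow\eta_{t_0}$ forces $\eta_{t_0}=0$ a.s., which is impossible; a finite subcover of the compact $K$ then yields the global statement. Your version has the mild advantage of working verbatim on any compact parameter space, since it never uses the interval structure of $[0,1]$ that the bisection exploits, while the paper's bisection avoids the covering bookkeeping. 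Two cosmetic slips: the correct standardization for standard negative exponential margins is $\bfeta_K=_D n\max_{1\le i\le n}\bfeta_K^{(i)}$ rather than $\frac1n\max_{1\le i\le n}\bfeta_K^{(i)}$, but this is immaterial here because the event $\{\max_{t\in K}\cdot<0\}$ is invariant under multiplication by a positive constant; and ``$\lim_m q_m^c$'' should simply read $\lim_m(1-q_m)=1$. Your discarded first attempt via a countable dense subset is indeed insufficient for exactly the reason you state, and the neighborhood/covering argument you substitute for it is sound.
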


\begin{proof}
The crucial argument in this proof is the following fact. We have
for an arbitrary interval $[a,b]\subset[0,1]$
\begin{equation}\label{eqn:zero_one_law}
P\left(\max_{t\in[a,b]\cap K}\eta_t<0\right)\in\set{0,1}.
\end{equation}
This can be seen as follows. Define for $n\in\N$ and arbitrary
$\varepsilon>0$ the function
$f_{n,\varepsilon}(t):=(-\varepsilon/n)1_{[a,b]\cap K}(t)$, $t\in K$.
Let $\bfeta_K^{(1)},\bfeta_K^{(2)},\dots$ be independent copies of $\bfeta_K$.
From the max-stability of $\eta$ we obtain
\begin{align*}
P\left(\max_{t\in[a,b]\cap K}\eta_t\le \frac{-\varepsilon}
n\right)&=P\left(\bfeta_K\le f_{n,\varepsilon}\right)\\
&=\left(P\left(\bfeta_K\le f_{n,\varepsilon}\right)^n\right)^{1/n}\\
&=P\left(\max_{1\le i\le n}\bfeta_K^{(i)}\le
f_{n,\varepsilon}\right)^{1/n}\\
&=P\left(n\max_{1\le i\le n}\bfeta_K^{(i)}\le
nf_{n,\varepsilon}\right)^{1/n}\\
&=P\left(\bfeta_K\le n f_{n,\varepsilon}\right)^{1/n}\\
&=P\left(\max_{t\in[a,b]\cap K}\eta_t\le-\varepsilon\right)^{1/n}\\
&\to_{n\to\infty}1
\end{align*}
unless $P\left(\max_{t\in[a,b]\cap K}\eta_t\le-\varepsilon\right)=0$.
Equation \eqref{eqn:zero_one_law} now follows by the continuity from below of a probability measure:
\begin{align*}
P\left(\max_{t\in[a,b]\cap K}\eta_t<0\right)&=P\left(\bigcup_{n\in\N}\set{\max_{t\in[a,b]\cap K}\eta_t\le\frac{-\varepsilon}
n}\right)\\
&=\lim_{n\to\infty}
P\left(\max_{t\in[a,b]\cap K}\eta_t\le\frac{-\varepsilon} n\right).
\end{align*}

Equation \eqref{eqn:zero_one_law} implies
$$
1-P\left(\max_{t\in K}\eta_t<0\right)=P\left(\max_{t\in K}\eta_t=0\right)= P\left(\max_{t\in[0,1]\cap K}\eta_t=0\right)\in\set{0,1}.
$$
We show by a contradiction that this probability is actually zero. Assume that
it is $1$. We divide the interval $[0,1]$ into the two
subintervals $[0,1/2]$, $[1/2,1]$. Now we obtain from equation \eqref{eqn:zero_one_law}
that $P\big(\max_{t\in[0,1/2]\cap K}\eta_t=0\big)=1$ or
$P\big(\max_{t\in[1/2,1]\cap K}\eta_t=0\big)=1$. Suppose without loss of generality
that the first probability is $1$ (if one of the two intersections with $K$ is empty, the probability concerning the other one has to be equal to $1$). Then we divide the interval
$[0,1/2]$ into the two subintervals $[0,1/4]$, $[1/4,1/2]$ and
repeat the preceding arguments. By iterating, this generates a
sequence of nested intervals $I_n=[t_n,\tilde t_n]$ in $[0,1]$
with $P\left(\max_{t\in I_n\cap K}\eta_t=0\right)=1$, $\tilde t_n-
t_n=2^{-n}$, $n\in\N$, and $t_n\uparrow t_0$, $\tilde
t_n\downarrow t_0$ as $n\to\infty$ for some $t_0\in K$. From
the lower continuity of a probability measure we now conclude
\begin{align*}
0&=P\left(\eta_{t_0}=0\right)\\
&=P\left(\bigcap_{n\in\N}\set{\max_{t\in
I_n\cap K}\eta_t=0}\right)\\
&=\lim_{n\to\infty}P\left(\set{\max_{t\in
I_n\cap K}\eta_t=0}\right)\\
&=1,
\end{align*}
since $\eta_{t_0}$ is negative exponential distributed; but this is the desired contradiction.
  \end{proof}

A repetition of the arguments in the proof in de Haan and Ferreira \cite[p. 306]{dehaf06} yields the dual result:

\begin{lemma}\label{simpleMSPnotzero}
Let $K$ be a compact subset of $[0,1]$ and let $\bfxi_K=(\xi_t)_{t\in K}$ be a max-stable process on $K$ with standard Fr\'{e}chet margins, which realizes  in the space $\bar C^+(K):=\{f:K\to[0,\infty),\,f \mbox{ is continuous}\}$ of nonnegative continuous functions on $K$. Then we have
\[
P\left(\inf_{t\in K}\bfxi(t)>0\right)=1.
\]
\end{lemma}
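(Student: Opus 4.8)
The plan is to follow the architecture of the proof of Lemma~\ref{standardMSPnotzero}: first I would prove the zero--one law $P(\inf_{t\in[a,b]\cap K}\xi_t>0)\in\set{0,1}$ for every subinterval $[a,b]\subseteq[0,1]$, and then localize the event $\set{\inf_{t\in K}\xi_t=0}$ by repeated bisection down to a single point $t_0\in K$, where the standard Fr\'echet margin $P(\xi_{t_0}=0)=0$ produces a contradiction. The point at which this dual statement is \emph{not} a literal copy of Lemma~\ref{standardMSPnotzero} is the zero--one law: there the crucial feature was that $\set{\max_{1\le i\le n}\bfeta_K^{(i)}\le f}=\bigcap_{i=1}^n\set{\bfeta_K^{(i)}\le f}$ factorizes over independent copies, which yields the $P(\cdot)^{1/n}$ argument. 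A lower--bound event such as $\set{\inf_t\xi_t>0}$ does not factorize in this way, and passing to $\bfeta:=-1/\bfxi$ is of no help, since $\bfeta$ would take the value $-\infty$ precisely on the set one is trying to control; I therefore expect this zero--one law to be the main obstacle, and would establish it instead via the zero sets of the sample paths.

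\emph{The zero--one law.} Fix a compact $K'\subseteq[0,1]$, put $q:=P(\inf_{t\in K'}\xi_t=0)$, and let $Z:=\set{t\in K':\xi_t=0}$ be the closed (possibly empty) zero set of the path; since $\bfxi_{K'}$ realizes in $\bar C^+(K')$ the infimum is attained, whence $\set{\inf_{t\in K'}\xi_t=0}=\set{Z\ne\emptyset}$. Max-stability of $\bfxi_{K'}$ together with the standard Fr\'echet margins forces the norming to be $a_n\equiv n,\ b_n\equiv0$, so that $\bfxi_{K'}=_D n^{-1}\max_{1\le i\le n}\bfxi_{K'}^{(i)}$ for independent copies $\bfxi_{K'}^{(i)}$ with zero sets $Z^{(i)}$. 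Applying the continuous functional $x\mapsto\inf_{t\in K'}x(t)$, and observing that $\max_{1\le i\le n}\xi_t^{(i)}=0$ for some $t$ amounts to $t\in\bigcap_{i=1}^n Z^{(i)}$ (all paths being nonnegative), I obtain $q=P\big(\bigcap_{i=1}^n Z^{(i)}\ne\emptyset\big)$ for every $n\in\N$. The events $\set{\bigcap_{i=1}^n Z^{(i)}\ne\emptyset}$ decrease in $n$ and, the $Z^{(i)}$ being closed subsets of the compact $K'$, their intersection over $n$ equals $\set{\bigcap_{i\in\N}Z^{(i)}\ne\emptyset}$; continuity from above of $P$ then gives $P\big(\bigcap_{i\in\N}Z^{(i)}\ne\emptyset\big)=q$. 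Since this event is contained in $\set{Z^{(1)}\ne\emptyset}$, which also has probability $q$, the two agree up to a null set; by exchangeability of the $Z^{(i)}$ the same holds with $Z^{(1)}$ replaced by $Z^{(2)}$, and hence $\set{Z^{(1)}\ne\emptyset}=\set{Z^{(2)}\ne\emptyset}$ almost surely. As these events are independent and each has probability $q$, it follows that $q=q^2$, i.e.\ $q\in\set{0,1}$.

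\emph{The bisection.} Suppose, for a contradiction, that $P(\inf_{t\in K}\xi_t>0)<1$; by the zero--one law applied to $K'=K$ this means $P(\inf_{t\in K}\xi_t=0)=1$. From $\inf_{t\in K}\xi_t=\min\set{\inf_{s\in[0,1/2]\cap K}\xi_s,\ \inf_{s\in[1/2,1]\cap K}\xi_s}$, at least one of the two halves, say $J$, has $P(\inf_{t\in J\cap K}\xi_t=0)>0$, hence $=1$ by the zero--one law, and in particular $J\cap K\ne\emptyset$. Iterating produces nested closed intervals $I_n$ with $\abs{I_n}=2^{-n}$, $I_n\cap K\ne\emptyset$, and $P(\inf_{t\in I_n\cap K}\xi_t=0)=1$. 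Then $\bigcap_n I_n=\set{t_0}$ for some $t_0$; choosing $s_n\in I_n\cap K$ gives $s_n\to t_0$, so $t_0\in K$, and $t_0\in I_n\cap K$ for every $n$. For $t\in I_n\cap K$ one has $\abs{t-t_0}\le2^{-n}$, so $\omega_n:=\sup_{t\in I_n\cap K}\abs{\xi_t-\xi_{t_0}}\to0$ by continuity of the path at $t_0$, and therefore $\xi_{t_0}\ge\inf_{t\in I_n\cap K}\xi_t\ge\xi_{t_0}-\omega_n$. On the almost sure event on which $\inf_{t\in I_n\cap K}\xi_t=0$ for all $n$ this yields $0\le\xi_{t_0}\le\omega_n$ for every $n$, so $\xi_{t_0}=0$ almost surely, contradicting $P(\xi_{t_0}=0)=0$. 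Hence $P(\inf_{t\in K}\xi_t>0)=1$, as claimed.
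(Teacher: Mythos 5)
Your proof is correct, and it is worth noting that it does not reproduce an argument the paper actually spells out: for this lemma the paper only cites the literature (Gin\'e et al.'s random-set proof and the elementary argument on p.~306 of de Haan and Ferreira), its own detailed proof being given only for the dual Lemma on standard MSPs. You correctly identify why that dual proof does not transfer verbatim: its zero--one law rests on the factorization $P(\max_{1\le i\le n}\bfeta^{(i)}\le f)=P(\bfeta\le f)^n$, which is available for upper-bound (df-type) events but not for $\set{\inf_t\xi_t>0}$, and passing to $-1/\bfxi$ would be circular since the paper invokes precisely this lemma to make that transformation well defined. Your replacement -- the zero--one law via the random zero sets $Z^{(i)}$, the identity $q=P\bigl(\bigcap_{i\le n}Z^{(i)}\ne\emptyset\bigr)$ obtained from $\bfxi_{K'}=_D n^{-1}\max_{i\le n}\bfxi_{K'}^{(i)}$, and the conclusion $q=q^2$ -- is sound, and the subsequent bisection mirrors the paper's treatment of the standard case; in spirit it is close to the random-set viewpoint of Gin\'e et al., but self-contained. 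Two small remarks: the compactness/finite-intersection-property and almost-sure-equality detour can be skipped, since $\set{\bigcap_{i\le n}Z^{(i)}\ne\emptyset}\subseteq\bigcap_{i\le n}\set{Z^{(i)}\ne\emptyset}$ already gives $q\le q^n$ by independence, hence $q\in\set{0,1}$ at once (this is essentially the de Haan--Ferreira inequality $\inf_t\max_i\xi_t^{(i)}\ge\max_i\inf_t\xi_t^{(i)}$); and in the bisection you should, as you implicitly do, fix the convention that an infimum over an empty index set is $+\infty$ so that the half chosen always meets $K$.
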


The following crucial characterization of continuous max-stable processes is a consequence of Gin\'{e} et al. \cite[Proposition 3.2]{ginhv90}; we refer also to de Haan and Ferreira \cite[Theorem 9.4.1]{dehaf06}.

\begin{proposition}\label{prop:characterization_of_MSP}
Let $\bfZ$ be a stochastic process which realizes in $\bar C^+[0,1]$ with the properties
\begin{equation}\label{eqn:properties_of_generator}
\max_{t\in[0,1]}Z_t = m \in[1,\infty) \mbox{ a.s. and } E(Z_t)=1,\quad t\in[0,1].
\end{equation}
\begin{itemize}
  \item[(i)] A process $\bfxi$ in $\bar C^+[0,1]$ is a \emph{simple} MSP if there exists a  stochastic process $\bfZ$ as above such that for compact subsets $K_1,\dots,K_d$ of $[0,1]$ and
$x_1,\dots,x_d> 0$, $d\in\N$,
\begin{align}\label{eqn:finite_distribution_of_simple_MSP}
&P\left(\max_{t\in K_j}\bfxi_t\le x_j, 1\le j\le
d\right)\nonumber\\
&=\exp\left(-E\left(\max_{1\le j\le d} \left(\frac{\max_{t\in
K_j}Z_t}{x_j}\right)\right)\right).
\end{align}
	\item[(ii)]  A process $\bfeta$ in $\barC=\set{f\in C[0,1]:\,f\le 0}$ is a \emph{standard} MSP if there exists a stochastic process $\bfZ$ as above
such that for compact subsets $K_1,\dots,K_d$ of $[0,1]$ and
$x_1,\dots,x_d\le 0$, $d\in\N$,
\begin{align}\label{eqn:finite_distribution_of_MSP}
&P\left(\max_{t\in K_j}\bfeta_t\le x_j, 1\le j\le d\right)\nonumber\\
&=\exp\left(-E\left(\max_{1\le j\le d} \left(\abs{x_j}\max_{t\in
K_j}Z_t\right)\right)\right).
\end{align}\label{item:finite_distribution_of_MSP}
\end{itemize}

Conversely, every stochastic process $\bfZ$ in $\bar C^+[0,1]$ satisfying
\eqref{eqn:properties_of_generator} gives rise to a simple and to a standard MSP.
The connection is via \eqref{eqn:finite_distribution_of_simple_MSP} and \eqref{eqn:finite_distribution_of_MSP}, respectively.
We call $\bfZ$ \emph{generator} of $\bfeta$ and $\bfxi$.
\end{proposition}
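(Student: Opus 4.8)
The statement is essentially the functional analogue of the de Haan spectral representation of simple max-stable vectors, so the plan is to reduce everything to the finite-dimensional case via projections and then use the consistency of the families \eqref{eqn:finite_distribution_of_simple_MSP} and \eqref{eqn:finite_distribution_of_MSP} to build the processes. I would prove the ``converse'' direction first, since it is the constructive heart of the matter, and then derive the forward implications as a consequence. So, starting from a process $\bfZ$ in $\bar C^+[0,1]$ satisfying \eqref{eqn:properties_of_generator}, I would fix compact sets $K_1,\dots,K_d$ and define a set function on ``boxes'' $\set{\max_{t\in K_j}\xi_t\le x_j,\,1\le j\le d}$ by the right-hand side of \eqref{eqn:finite_distribution_of_simple_MSP}. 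The first task is to check that for each fixed finite collection $(K_j)$ this really is (the survival form of) a $d$-dimensional distribution function: nonnegativity of the rectangle increments, the right normalization as $x_j\to\infty$, and degeneracy as some $x_j\downarrow 0$. Here the key point is that $g(x_1,\dots,x_d):=E\big(\max_j(\max_{t\in K_j}Z_t)/x_j\big)$ is $1$-homogeneous, and $V(x_1,\dots,x_d)=\exp(-g)$ is a standard multivariate EVD with unit Fr\'echet margins precisely because $E(\max_{t\in K_j}Z_t)\ge E(Z_{t_0})=1$ for any $t_0\in K_j$ combined with $\max_{t\in K_j}Z_t\le m$; the exponent measure / $D$-norm machinery of the multivariate case (Falk et al. \cite{fahure10}) gives the rectangle inequalities for free once $g$ is written as an $L^1$-type norm of the vector $(x_1^{-1},\dots,x_d^{-1})$ weighted by the spectral process.

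The second task is consistency: if we enlarge the family of compact sets, or permute them, the finite-dimensional laws defined by \eqref{eqn:finite_distribution_of_simple_MSP} must agree on the overlap. This is immediate from the explicit formula, because inserting $x_j=\infty$ for an index simply drops the corresponding term from the inner maximum, and because the formula is symmetric in the pairs $(K_j,x_j)$. Having checked Kolmogorov consistency for the index set of all compact subsets of $[0,1]$, one obtains a process $(\max_{t\in K}\xi_t)_K$; restricting to singletons $K=\set{t}$ yields a process $\bfxi=(\xi_t)_{t\in[0,1]}$ whose fidis are exactly \eqref{eqn:finite_distribution_of_simple_MSP} with singletons, and from these one reads off that $\bfxi$ has unit Fr\'echet margins. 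Max-stability of $\bfxi$ is then a one-line computation: $P(\max_{t\in K_j}\xi_t\le n x_j,\,1\le j\le d)^n$ equals the same expression with $x_j$ in place of $n x_j$, by the $1$-homogeneity of $g$, which is precisely \eqref{DefinitionMaxStableProcess} read on finite-dimensional projections with $a_n(t)=n$, $b_n(t)=0$. The standard MSP $\bfeta$ is obtained from the same construction with the substitution $\eta_t=-1/\xi_t$ (equivalently, run the argument directly with \eqref{eqn:finite_distribution_of_MSP}, noting $g(|x_1|,\dots,|x_d|)$ is the same homogeneous functional), and Lemma \ref{simpleMSPnotzero} and Lemma \ref{standardMSPnotzero} guarantee that the transformation $\xi_t\leftrightarrow -1/\eta_t$ is almost surely well defined and continuity is preserved. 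The forward implications (i) and (ii) are then nothing but the cited Gin\'e--Hahn--Vitale result \cite[Proposition 3.2]{ginhv90}: a continuous simple MSP admits a generator $\bfZ$ with the stated properties, so its fidis necessarily have the form \eqref{eqn:finite_distribution_of_simple_MSP}; I would simply quote this and indicate how the normalization $E(Z_t)=1$ is forced by matching unit Fr\'echet margins and how $\max_t Z_t=m<\infty$ a.s. follows from continuity of $\bfZ$ on the compact $[0,1]$ together with the de Haan--Ferreira argument at \cite[p.~306]{dehaf06}.

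The main obstacle is the passage from finite-dimensional data to an honest process in the function space $\bar C^+[0,1]$ (resp.\ $\barC$): Kolmogorov's theorem only delivers a process with the prescribed fidis on some product space, not a version with continuous paths. The fix is to invoke Gin\'e et al.\ \cite{ginhv90}, where exactly this continuity/tightness issue is resolved for max-stable processes via their random-set representation; I would phrase the construction so that the generator $\bfZ$, being continuous on $[0,1]$, produces via the usual Poisson point process representation $\bfxi = \sup_i U_i^{-1}\bfZ^{(i)}$ (with $\set{U_i}$ a standard Poisson process on $(0,\infty)$ and $\bfZ^{(i)}$ iid copies of $\bfZ$) a process whose sample paths are automatically in $\bar C^+[0,1]$, the a.s.\ finiteness coming from $\max_t Z_t\le m$ and $\sum_i U_i^{-1}<\infty$ a.s.; checking that this $\bfxi$ has the fidis \eqref{eqn:finite_distribution_of_simple_MSP} is the standard exponential-formula computation for Poisson functionals. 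The remaining routine verifications—continuity of the limit process, that the sup over a compact $K$ of a continuous path is the pointwise sup, measurability of $K\mapsto \max_{t\in K}\xi_t$—I would dispatch briefly, pointing to \cite{ginhv90} and \cite[Ch.~9]{dehaf06} for details rather than reproving them.
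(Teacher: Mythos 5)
Your proposal is correct and takes essentially the same route as the paper, whose entire proof consists of citing Gin\'e et al.\ \cite[Proposition 3.2]{ginhv90} -- identifying the spectral measure $\sigma$ there with $m(P\ast\tilde\bfZ)$, i.e.\ exactly the Poisson representation $\bfxi=\sup_i U_i^{-1}\bfZ^{(i)}$ that you unpack -- and then passing to the standard case via $\bfeta=-1/\bfxi$, justified by Lemma \ref{simpleMSPnotzero}, just as you do. One slip to correct: for a rate-one Poisson process on $(0,\infty)$ the sum $\sum_i U_i^{-1}$ is a.s.\ \emph{infinite}; what you actually need (and what holds) is $\sup_i U_i^{-1}=\left(\min_i U_i\right)^{-1}<\infty$ a.s.\ together with the fact that only finitely many points satisfy $mU_i^{-1}>\varepsilon$, which yields both a.s.\ finiteness and continuity of the supremum (similarly, over non-singleton compacts $K_j$ the margins are Fr\'echet with scale $E\left(\max_{t\in K_j}Z_t\right)\ge 1$ rather than unit Fr\'echet, though nothing in your argument depends on that).
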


\begin{proof}
Identify the finite measure $\sigma$ in Gin\'{e} et al. \cite[Proposition 3.2]{ginhv90} with $m(P\ast \tilde\bfZ)$, where $(P\ast \tilde\bfZ)$ denotes the distribution of some process $\tilde\bfZ\in \bar C_1^+:=\{f\in C[0,1]: f\geq 0,\norm{f}_\infty =1 \}$ and set $\bfZ=m\tilde\bfZ$, where $m$ is the total mass of the measure $\sigma$. Assertion \textit{(ii)} now follows by setting $\bfeta= -1/\bfxi$, which is well defined by Lemma \ref{simpleMSPnotzero}.
  \end{proof}

According to de Haan and Ferreira \cite[Corollary 9.4.5]{dehaf06},  condition \eqref{eqn:properties_of_generator} can be weakened to the condition $E\left(\max_{t\in[0,1]} Z_t\right)<\infty$, together with $E(Z_t)=1$, $t\in[0,1]$. While a generator $\bfZ$ is in general not uniquely determined,  the number $m=E\left(\max_{t\in[0,1]}Z_t\right)$ is, see below. We, therefore, call $m$ the \emph{generator constant} of $\bfeta$.

The above characterization implies in particular that the
fidis $G_{t_1,\dots,t_d}(\bfx)=P(\eta_{t_1}\le x_1,\dots,\eta_{t_d}\le x_d)$, $\bfx=(x_1,\dots,x_d)\in\R^d$, of $\bfeta$ are multivariate EVD with standard negative exponential
margins: We have for $0\le t_1<t_2\dots<t_d\le 1$
\begin{equation}\label{eqn:definition_of_D-norm}
-\log(G_{t_1,\dots,t_d}(\bfx))=E\left(\max_{1\le i\le
d}(\abs{x_i}Z_{t_i})\right)
=:\norm{\bfx}_{D_{t_1,\dots,t_d}},\  \bfx\le\bfzero\in\R^d,
\end{equation}
where $\norm{\cdot}_{D_{t_1,\dots,t_d}}$ is a \textit{$D$-norm} on
$\R^d$ (see Falk et al. \cite{fahure10}).

Denote by $E[0,1]$ the set of all functions on $[0,1]$ that are
bounded and which have only a finite number of discontinuities. Furthermore, denote by $\bar E^-[0,1]$ those functions in $E[0,1]$ which do not attain positive values.

\begin{definition} For a generator process $\bfZ$ in $\bar C^+[0,1]$ with properties \eqref{eqn:properties_of_generator} set
\[
\norm{f}_D:=E\left(\sup_{t\in [0,1]}\left(\abs{f(t)}Z_t\right)\right),\qquad f\in E[0,1].
\]
Then $\norm{\cdot}_D$ obviously defines a norm on $E[0,1]$, called a
\emph{$D$-norm with generator $\bfZ$}.
\end{definition}

The sup-norm $\norm f_\infty:=\sup_{t\in[0,1]}\abs{f(t)}$, $f\in E[0,1]$,
is a particular $D$-norm with constant generator $Z_t=1$,
$t\in[0,1]$. It is, moreover, the least $D$-norm, as
\begin{equation}\label{sup_f_D_sandwich}
\norm f_\infty\le \norm f_D\le m \norm f_\infty,\qquad f\in
E[0,1],
\end{equation}
for any $D$-norm $\norm \cdot_D$ whose generator satisfies
$E\left(\max_{t\in[0,1]}Z_t\right)=m$. For the constant function $f=1$ we obtain
$\norm 1_D=m$.

Note that inequality \eqref{sup_f_D_sandwich} implies that each functional $D$-norm is equivalent to the sup-norm. This, in turn, yields that no $L_p$-norm $\norm f_p=(\int_0^1 \abs{f(t)}^p\,dt)^{1/p}$ with $p\in(0,\infty)$ is a functional $D$-norm.

The following result provides the distribution function of a standard max-stable process in terms of the functional $D$-norm, which is in high conformity with the finite dimensional case (cf. Falk et al. \cite[Section 4.4]{fahure10}).

\begin{lemma}\label{prop:distribution_function_of_standard_MSP}
Let $\bfeta$ be a standard MSP with generator $\bfZ$. Then we have for each $ f\in \bar E^-[0,1]$
\begin{equation}\label{eq:distribution_function_of_standard_MSP}
P(\bfeta\le f) = \exp\left(-\norm f_D\right)= \exp\left(-E\left(\sup_{t\in [0,1]}\left(\abs{f(t)}Z_t\right)\right)\right).
\end{equation}

Conversely, if there is some $\bfZ$ with properties \eqref{eqn:properties_of_generator} and some $\bfeta\in C^-[0,1]$ which satisfies \eqref{eq:distribution_function_of_standard_MSP}, then $\bfeta$ is standard max-stable with generator $\bfZ$.
\end{lemma}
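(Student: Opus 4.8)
The plan is to deduce the formula for $P(\bfeta \le f)$ from the finite-dimensional formula \eqref{eqn:finite_distribution_of_MSP} of Proposition \ref{prop:characterization_of_MSP} by a monotone approximation argument, approximating the event $\{\bfeta \le f\}$ from outside by cylinder events based on finitely many coordinates. First I would fix $f \in \bar E^-[0,1]$ and, since $f$ has only finitely many discontinuities, partition $[0,1]$ at those discontinuity points so that on each of the resulting finitely many subintervals $f$ is continuous. On a compact subinterval $I$ where $f$ is continuous, I would choose a sequence of finite sets $D_n = \{t_1^{(n)} \ld t_{k_n}^{(n)}\}$ whose mesh shrinks to $0$, and approximate $\abs{f}$ on $I$ by the step functions $\bar f_n := \max_{i:\,t_i^{(n)}\in I_i} \abs{f(t_i^{(n)})}$ that are constant on the cells $I_i$ of the partition induced by $D_n$; by uniform continuity of $f$ on $I$ these step functions decrease (after taking the natural nested refinement, or after passing to the pointwise $\inf$) to $\abs{f}$ uniformly. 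The corresponding cylinder events $\{\eta_{t} \le f(t_i^{(n)}) \text{ on each cell}\}$ — equivalently $\{\max_{t\in I_i}\eta_t \le f(t_i^{(n)})\}$ after also using the cell structure — are handled exactly by \eqref{eqn:finite_distribution_of_MSP} with the compact sets being the cells, so their probabilities equal $\exp(-E(\max_i \abs{f(t_i^{(n)})}\max_{t\in I_i}Z_t)) = \exp(-E(\sup_{t\in[0,1]}(\bar f_n(t)Z_t)))$ after combining the finitely many subintervals.

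Next I would pass to the limit. The events $\{\bfeta \le \bar f_n\}$ (intersected over all the subintervals) decrease to $\bigcap_n\{\bfeta\le\bar f_n\} = \{\bfeta \le f\}$ — this uses that $\bar f_n \downarrow f$ pointwise together with continuity of the paths of $\bfeta$, so that $\bfeta\le f$ on $[0,1]$ is equivalent to $\bfeta\le\bar f_n$ for all $n$. By continuity from above of the probability measure, $P(\bfeta\le f) = \lim_n \exp(-E(\sup_t(\bar f_n(t)Z_t)))$. On the right-hand side, $\sup_t(\bar f_n(t)Z_t) \downarrow \sup_t(\abs{f(t)}Z_t)$ pointwise on the probability space (here using $\bar f_n\downarrow\abs f$ uniformly, hence the suprema converge), and the integrands are dominated by $\sup_t(\bar f_1(t)Z_t) \le \norm{\bar f_1}_\infty \max_{t\in[0,1]}Z_t$, which is integrable by \eqref{eqn:properties_of_generator}. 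Dominated (or monotone) convergence then gives $E(\sup_t(\bar f_n(t)Z_t)) \to E(\sup_t(\abs{f(t)}Z_t)) = \norm f_D$, proving \eqref{eq:distribution_function_of_standard_MSP}.

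For the converse, suppose $\bfeta \in C^-[0,1]$ satisfies \eqref{eq:distribution_function_of_standard_MSP} for a generator $\bfZ$ with properties \eqref{eqn:properties_of_generator}. Specializing $f$ to functions that are constant equal to $x_j\le 0$ on a compact set $K_j$ — more precisely, taking $f$ supported on $\bigcup_j K_j$ with $f\equiv x_j$ on $K_j$ (and suitably defined, say $0$, off the union, which does not affect the event since we only constrain the coordinates in the $K_j$) — recovers exactly the right-hand side of \eqref{eqn:finite_distribution_of_MSP}, so the fidis of $\bfeta$ coincide with those of the standard MSP with generator $\bfZ$; taking in particular $K_j=\{t_j\}$ shows the margins are standard negative exponential. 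Since $\bfeta$ has continuous paths and its fidis agree with those of a standard MSP, $\bfeta$ itself is a standard MSP with generator $\bfZ$ (max-stability is a statement about fidis, which transfers).

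The main obstacle I anticipate is making the approximation step fully rigorous at the discontinuity points of $f$: one must be careful that the step-function approximants $\bar f_n$ are $\ge \abs f$ everywhere and decrease to $\abs f$, which forces a slightly asymmetric treatment near each jump (assigning to a cell straddling a discontinuity the larger of the neighboring values, or refining the partition so that discontinuity points are always grid points), and one must check that the limiting intersection of cylinder events is exactly $\{\bfeta\le f\}$ and not something larger — this is where path-continuity of $\bfeta$ (and the definition of $\bar E^-[0,1]$) is essential. The interchange of limit and expectation is then routine given the integrable dominating function from \eqref{sup_f_D_sandwich}.
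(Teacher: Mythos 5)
Your overall strategy --- reduce to the finite-dimensional formula of Proposition \ref{prop:characterization_of_MSP}, pass to the limit by continuity of the probability measure, and interchange limit and expectation by dominated convergence --- is the same as the paper's, but your approximation scheme has a genuine gap at the discontinuity points of $f$, and the direction-of-approximation issue you yourself flag at the end is not resolved, it is the crux. If you want decreasing cylinder events whose intersection is exactly $\set{\bfeta\le f}$ (your ``outer'' reading, $\bar f_n\downarrow f$ pointwise), the constraint on each cell must be $\max_{t\in I_i}\eta_t\le\sup_{t\in I_i}f(t)$; but then $\bar f_n$ fails to converge to $f$ at a discontinuity point where $f$ lies strictly below its local limits, no matter how you place the grid. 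Concretely, for $f=x\,1_{\set{t_0}}$ with $x<0$ --- exactly the functions the lemma must cover in order to recover the fidis --- every nondegenerate cell containing $t_0$ has $\sup_I f=0$, so all your cylinder events have probability $1$, their intersection is strictly larger than $\set{\bfeta\le f}=\set{\eta_{t_0}\le x}$, and your limit would produce $1$ instead of the correct value $e^{-\abs{x}}$. If instead you insist on $\bar f_n\ge\abs f$ (your ``inner'' reading, i.e.\ constraints $\bfeta\le-\bar f_n\le f$), the cylinder events increase, but their union is in general a proper subset of $\set{\bfeta\le f}$ (paths that touch $f$ are missed), so this route only yields the lower bound $P(\bfeta\le f)\ge\exp\left(-\norm f_D\right)$; closing the gap requires knowing that the ``touching'' event is null, essentially $P(\bfeta\le f)=P(\bfeta<f)$, which in the paper is a consequence of this very lemma (Lemma \ref{dfofEVPcontinuous}) and hence cannot be assumed here. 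Either way, the missing ingredient is to impose the constraint at the exceptional points themselves via degenerate (singleton) compact sets --- and once singletons are allowed, the cell machinery is superfluous: the paper's proof simply takes a countable dense set $Q$ containing the discontinuity points of $f$, observes that $\set{\bfeta\le f}=\set{\eta_q\le f(q),\ q\in Q}$ by path continuity of $\bfeta$ and continuity of $f$ off $Q$, applies \eqref{eqn:definition_of_D-norm} to the first $d$ points of $Q$, and lets $d\to\infty$ using continuity from above and dominated convergence (with the sup over $Q$ equal to the sup over $[0,1]$ by continuity of $\bfZ$).

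Your converse is essentially the paper's argument: the fidis of $\bfeta$ obtained from \eqref{eq:distribution_function_of_standard_MSP} coincide with those of the standard MSP generated by $\bfZ$ via Proposition \ref{prop:characterization_of_MSP}, and two continuous processes with identical fidis have the same law, so $\bfeta$ is that MSP. One caveat: plugging in $f=\sum_j x_j 1_{K_j}$ for \emph{arbitrary} compact sets $K_j$ is not justified by \eqref{eq:distribution_function_of_standard_MSP}, since such an $f$ need not belong to $\barE$ (the indicator of a Cantor-type compact set has infinitely many discontinuities). Restricting to singletons $K_j=\set{t_j}$, as you do at the end, is both legitimate and sufficient, since equality of the fidis is all that is needed.
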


\begin{proof}
Let $Q=\set{q_1,q_2\dots}$ be a denumerable and dense subset of
$[0,1]$ which contains those finitely many points at which $f\in \bar E^-[0,1]$ has a discontinuity.

We obtain from the continuity of $\bfeta$, the continuity from above of each probability measure and equation
\eqref{eqn:definition_of_D-norm}
\allowdisplaybreaks[4]\begin{align*}
P(\bfeta \le f) &=P\left(\bigcap_{d\in\N}\set{\eta_{q_j}\le f(q_j),\,1\le j\le d}\right)\\
&=\lim_{d\to\infty} P\left(\eta_{q_j}\le f(q_j),\,1\le j\le d\right)\\
&=\lim_{d\to\infty} \exp\left(-E\left(\max_{1\le j\le d}\left(\abs{f(q_j)}Z_{q_j}\right)\right)\right)\\
&=\exp\left(- E\left(\lim_{d\to\infty} \max_{1\le j\le d}\left(\abs{f(q_j)}Z_{q_j}\right)\right)\right)\\
&=\exp\left(- E\left(\sup_{t\in[0,1]}\left(\abs{f(t)}Z_t\right)\right)\right)\\
&=\exp\left(-\norm f_D\right),
\end{align*}
where the third to last equation follows from the dominated
convergence theorem.

If some $\bfZ$ has properties \eqref{eqn:properties_of_generator} it gives rise to some standard max-stable process $\widehat \bfeta$ due to Proposition \ref{prop:characterization_of_MSP}. But the fidis of $\widehat \bfeta$ given by \eqref{eqn:finite_distribution_of_MSP} and those of $\bfeta$ given by \eqref{eq:distribution_function_of_standard_MSP} coincide, so $\widehat \bfeta =_D \bfeta$ follows.
  \end{proof}

The extension to $f\in \bar E^-[0,1]$ allows the incorporation of the fidis of $\bfeta$ into the preceding representation: Choose indices $0\le t_1<\dots<t_d\le 1$ and
numbers $x_i\le 0$, $1\le i\le d$, $d\in\N$. The function
\[
f(t)=\sum_{i=1}^d x_i 1_{\set{t_i}}(t)
\]
is an element of $\bar E^-[0,1]$ with the property
\begin{align*}
P(\bfeta\le f)&= \exp\left(-\norm f_D\right)\\
&=\exp\left(-E\left(\max_{1\le i\le d}\left(\abs{x_i} Z_{t_i}\right)\right)\right)\\
&=\exp\left(-\norm{\bfx}_{D_{t_1,\dots,t_d}}\right).
\end{align*}

This is one of the reasons, why we prefer standard MSPs (with standard negative exponential margins),
whereas de Haan and Ferreira [9], for instance, consider simple MSPs (with standard Fr\'{e}chet margins).

We can now, for example, extend Takahashi's \cite{taka88} characterization of the
maximum-norm in $\R^d$ to the functional space $E[0,1]$.

\begin{lemma}[Functional  Takahashi] Let $\norm \cdot_D$ be an arbitrary $D$-norm on
$E[0,1]$ with generator $\bfZ$. Then
\begin{align*}
&\norm f_D=\norm f_\infty\mbox{ for at least one }f\in
E[0,1]\mbox{
with }f(t)\not=0,\,t\in[0,1]\\
&\iff\norm\cdot_D=\norm\cdot_\infty\mbox{ on }E[0,1].
\end{align*}
\end{lemma}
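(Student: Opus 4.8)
The implication ``$\Leftarrow$'' is immediate: if $\norm\cdot_D=\norm\cdot_\infty$ on $E[0,1]$, then in particular the constant function $f\equiv 1$, which is nowhere zero, satisfies $\norm f_D=\norm f_\infty$. So all the work is in ``$\Rightarrow$'', and the first thing I would do is normalise the situation. Since both $\norm\cdot_D$ and $\norm\cdot_\infty$ depend on $f$ only through $\abs f$, I may assume the given $f$ is strictly positive on all of $[0,1]$; write $c:=\norm f_\infty>0$. Since the same $D$-norm is produced by a generator with $\sup_{t\in[0,1]}Z_t=m$ a.s.\ (this is what the spectral construction in the proof of Proposition~\ref{prop:characterization_of_MSP} yields, with $m=E(\sup_tZ_t)=\norm 1_D$), I may assume that normalisation for $\bfZ$. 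It then suffices to prove $m=1$: once $m=1$, the identity $E\big(\sup_sZ_s-Z_t\big)=m-1=0$ together with $\sup_sZ_s-Z_t\ge 0$ forces $Z_t=\sup_sZ_s$ a.s.\ for each $t$; running this over a countable dense set and invoking continuity of the paths of $\bfZ$ gives $Z_t=1$ a.s.\ simultaneously for all $t\in[0,1]$, whence $\norm h_D=E\big(\sup_t\abs{h(t)}\big)=\norm h_\infty$ for every $h\in E[0,1]$.

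Next I would extract the pointwise information hidden in the equality $\norm f_D=\norm f_\infty$. Choosing $t^*\in[0,1]$ that attains $c=\sup_tf(t)$, or, if this supremum is not attained, approximating it along a sequence $t_n\to t^*$ so that $f(t_n)Z_{t_n}\to c\,Z_{t^*}$ by continuity of $\bfZ$, one obtains in either case
\[
\sup_{t\in[0,1]}f(t)\,Z_t\ \ge\ c\,Z_{t^*}\qquad\text{a.s.}
\]
Because $E(c\,Z_{t^*})=c=\norm f_D=E\big(\sup_tf(t)Z_t\big)$ and the difference of the two sides is a.s.\ nonnegative, this inequality is in fact an a.s.\ equality; equivalently,
\[
f(t)\,Z_t\ \le\ c\,Z_{t^*}\qquad\text{for all }t\in[0,1],\ \text{a.s.},
\]
so that the \emph{whole} path of $\bfZ$ is dominated by $t\mapsto\dfrac{c}{f(t)}\,Z_{t^*}$, a function that is finite at every point of $[0,1]$ precisely because $f$ is nowhere zero.

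The hard part will be the last step: deducing $m=1$ (equivalently, $Z_t=Z_{t^*}$ a.s.\ for all $t$ in a dense set) from this domination together with $\sup_tZ_t=m$ a.s., $E(Z_t)=1$ for every $t$, and path continuity. The plan here is to use that $\bfZ$ attains its common maximum value $m$ at some $\tau\in[0,1]$, so that $f(\tau)\,m\le c\,Z_{t^*}$, and then to propagate this bound along the continuous path joining $\tau$ to $t^*$ (every intermediate level of $Z$ being realised by the intermediate value theorem) and integrate the resulting lower bounds for $Z_{t^*}$ against $E(Z_{t^*})=1$, forcing the rigidity $Z_t\equiv Z_{t^*}$ and hence $m=1$. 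I expect the subtlety to sit entirely in this step: the purely finite-dimensional analogue is false when the equality is known only at a single vector with all coordinates nonzero --- for instance $\norm{(2,1)}_D=\norm{(2,1)}_\infty$ does \emph{not} imply $\norm\cdot_D=\norm\cdot_\infty$ on $\R^2$ --- so the argument must make essential use of $f$ being nonzero on a \emph{continuum} of points, in combination with the path continuity of $\bfZ$ and the normalisation $\sup_tZ_t=m$ a.s.
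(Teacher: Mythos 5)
Up to the point where you stop, your reasoning is sound and it runs parallel to the opening of the paper's proof: the reduction to $f>0$, the normalisation $\sup_{t\in[0,1]}Z_t=m$ a.s., the remark that everything follows once $m=1$, and the a.s.\ identity $\sup_{t}f(t)Z_t=c\,Z_{t^*}$ (hence the domination $f(t)Z_t\le c\,Z_{t^*}$ for all $t$, a.s.), obtained by sandwiching expectations, are all correct; the paper derives exactly this kind of sandwich at finitely many points $t_0,t_1,\dots,t_d$. But the decisive step --- passing from this domination to $m=1$ --- is only announced as a plan (``propagate along the path \dots and integrate''), not proved, and the entire content of the lemma sits in the step you defer. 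So as it stands the proposal does not prove the statement.

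Moreover, this gap cannot be closed, and your own $(2,1)$ warning is precisely the reason: it survives the passage to the continuum despite path continuity. Let $\bfZ$ take each of the two continuous paths $2t$ and $2(1-t)$, $t\in[0,1]$, with probability $1/2$; then $E(Z_t)=1$ for all $t$ and $\max_{t\in[0,1]}Z_t=2$ a.s., so \eqref{eqn:properties_of_generator} holds with $m=2$. Put $f(t)=\bigl(2\max\set{t,1-t}\bigr)^{-1}$, which is continuous, nowhere zero, with $\norm f_\infty=f(1/2)=1$. On either path one has $\sup_{t}f(t)Z_t=1$, hence $\norm f_D=1=\norm f_\infty$; note also $Z_{1/2}=1$ a.s., so your domination holds with $t^*=1/2$ --- and yet $m=2$ and $\norm\bfone_D=2\neq1=\norm\bfone_\infty$. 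Thus the implication you still need (domination plus continuity plus $\sup_tZ_t=m$ a.s.\ forces $m=1$) is false, and in fact the lemma as stated fails for this $f$ and this $D$-norm. The paper's own proof breaks at the same spot: it invokes Takahashi's theorem in the form ``equality with $\norm\cdot_\infty$ at one vector with all coordinates nonzero implies $\norm\cdot_{D}=\norm\cdot_\infty$'', but that one-test-point criterion is the one valid for $\norm\cdot_1$; for the sup-norm the test vector must have coordinates of equal modulus, which is exactly what your bivariate example detects. What is true --- and what your rigidity computation, like the paper's closing argument, does prove --- is the version in which $\abs f$ is constant (equivalently $\norm\bfone_D=1$): then $\norm f_D=\abs c\,m$ forces $m=1$, whence $Z_t=\sup_sZ_s$ a.s.\ for every $t$ and, by continuity of the paths, $\norm\cdot_D=\norm\cdot_\infty$ on $E[0,1]$. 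If you want a provable statement, restrict the hypothesis to such $f$.
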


\begin{proof}
Let $f\in E[0,1]$ have the property $\norm f_D=\norm f_\infty$. Suppose first that $\norm f_\infty$ is attained on $[0,1]$, i.e., there exists $t_0\in[0,1]$ such that $\abs{f(t_0)}=\sup_{t\in[0,1]}\abs{f(t)}$. Then we obtain for
arbitrary indices $0\le t_1<\dots<t_d\le 1$
\[
\abs{f(t_0)}Z_{t_0}\le
\max_{i=0,1,\dots,d}\left(\abs{f(t_i)}Z_{t_i}\right)\le
\sup_{t\in[0,1]}\left(\abs{f(t)}Z_t\right)
\]
and, thus,
\begin{align*}
\norm f_\infty&= E\left(\abs{f(t_0)}Z_{t_0}\right)\\
&\le
E\left(\max_{i=0,1,\dots,d}\left(\abs{f(t_i)}Z_{t_i}\right)\right)\\
&=\norm{\left(f(t_0),\dots, f(t_d)\right)}_{D_{t_0,\dots,t_d}}\\
&\le E\left(\sup_{t\in[0,1]}\left(\abs{f(t)}Z_t\right)\right)\\
&=\norm f_D\\
&=\norm f_\infty,
\end{align*}
i.e.,
\[
\norm{\left(f(t_0),\dots,
f(t_d)\right)}_{D_{t_0,\dots,t_d}}=\norm{\left(f(t_0),\dots,
f(t_d)\right)}_\infty.
\]
Takahashi's Theorem \cite{taka88} for the finite-dimensional
Euclidean space now implies
\begin{equation}\label{eqn:functional_takahashi}
\norm\bfx_{D_{t_0,\dots,t_d}}=\norm\bfx_\infty,\qquad
\bfx\in\R^{d+1},
\end{equation}
for arbitrary $0\le t_1<\dots<t_d\le 1$, $d\in\N$. This, in turn
implies that $Z_t=Z_{t_0}$, $t\in[0,1]$, a.s., which can be seen
as follows. Choose $f$ the constant function $1$ and let $\mathbb
Q\cap [0,1]=\set{t_1,t_2,\dots}$. Then we have by equation
\eqref{eqn:functional_takahashi} for arbitrary $s\in
\set{t_1,t_2,\dots}$ if $d$ is large
\[
1=E(Z_s)\le E\left(\max_{i=0,\dots,d} Z_{t_i}\right)=1
\]
and, thus, by the dominated convergence theorem and the continuity
of $(Z_t)_{t\in[0,1]}$
\[
1=E(Z_s)\le E\left(\sup_{t\in[0,1]} Z_t\right)=1.
\]
But this implies $0=E\left(\sup_{t\in[0,1]} Z_t-Z_s\right)$ and,
hence, $Z_s=\sup_{t\in[0,1]} Z_t$ a.s., which yields
$Z_t=Z_{t_0}$, $t\in[0,1]$, a.s. by the continuity of the process
$\bfZ$. This implies $\norm f_D=\norm f_\infty$, $f\in E[0,1]$.

Suppose next that $\norm f_\infty$ is not attained. Then there
exists a sequence of indices $t_n$, $n\in\N$, in $[0,1]$ with
$t_n\to_{n\to\infty}t_0\in[0,1]$ and
$\lim_{n\to\infty}\abs{f(t_n)}=\norm f_\infty$. From the
continuity of the process $\bfZ$ we obtain
\[
\norm f_\infty Z_{t_0}=\lim_{n\to\infty}\abs{f(t_n)}Z_{t_n}
\]
and, thus,
\[
\norm f_\infty Z_{t_0}\le
\sup_{t\in[0,1]}\left(\abs{f(t)}Z_t\right).
\]
Choose arbitrary indices $0\le t_1<\dots,t_d\le 1$. Then
\begin{align*}
\norm f_\infty&=E\left(\norm f_\infty Z_{t_0}\right)\\
&\le E\left(\max\set{\norm f_\infty Z_{t_0},
\abs{f(t_1)}Z_{t_1},\dots, \abs{f(t_d)}Z_{t_d}}\right)\\
&\le E\left(\sup_{t\in[0,1]}\left(\abs{f(t)}Z_t\right)\right)\\
&=\norm f_\infty.
\end{align*}
From Takahashi's Theorem \cite{taka88} we now deduce that
\[
E\left(\max_{i=0,\dots,n}\left(\abs{f(t_i)}Z_{t_i}\right)\right)=\norm
f_\infty
\]
for arbitrary $f\in E[0,1]$. Concluding as above yields the
assertion.
  \end{proof}

The characterization of the distribution of a standard EVP $\bfeta$ via the $D$-norm has some further implications. The following assertions are essentially due to property \eqref{sup_f_D_sandwich}.

\begin{lemma}\label{dfofEVPcontinuous}
 Let $\bfeta$ in $\barC$ be a standard MSP and consider its  distribution function
 $$
 G(f)= P\left(\bfeta\leq f\right)=\exp\left(-\norm f_D\right),\quad f\in\barE.
 $$
 Then we have
 \begin{description}
	\item[(i)] $G(\cdot)$ is continuous with respect to the sup-norm.
\item[(ii)] For every $f\in\barE$ we have
\begin{equation*}
P\left(\bfeta\leq f\right)\ =\ P\left(\bfeta < f\right);
\end{equation*}
in particular, the sets $\left\{ g\in\barC:g(t)\leq f(t),\ \text{for all } t\in[0,1]\right\}$ are \emph{continuity sets} with respect to the distribution on $(\barC,\norm\cdot_\infty)$ of the standard MSP $\bfeta$.
\end{description}
\end{lemma}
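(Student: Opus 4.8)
The plan is to prove (i) and (ii) separately, with (ii) essentially reducing to a uniform-continuity-type estimate that the inequality \eqref{sup_f_D_sandwich} makes easy. For (i), I would fix $f\in\barE$ and a sequence $f_n\in\barE$ with $\norm{f_n-f}_\infty\to 0$, and show $G(f_n)\to G(f)$. Since $G(f)=\exp(-\norm f_D)$ and $\exp(-\cdot)$ is continuous, it suffices to show $\norm{f_n}_D\to\norm f_D$. By the reverse triangle inequality $\abs{\norm{f_n}_D-\norm f_D}\le\norm{f_n-f}_D$, and by the right-hand bound in \eqref{sup_f_D_sandwich}, $\norm{f_n-f}_D\le m\norm{f_n-f}_\infty\to 0$. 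This gives (i) immediately. (One should note that $G$ is only defined on $\barE$, so "continuity with respect to the sup-norm" here means sequential continuity of $G:(\barE,\norm\cdot_\infty)\to\R$; since norms are genuinely continuous this is unambiguous.)

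For (ii), the first task is to show $P(\bfeta\le f)=P(\bfeta<f)$ for $f\in\barE$, where $\bfeta<f$ means $\eta_t<f(t)$ for all $t\in[0,1]$. The inclusion $\{\bfeta<f\}\subset\{\bfeta\le f\}$ is trivial, so I need $P(\bfeta\le f)\le P(\bfeta<f)$, i.e.\ $P(\bfeta\le f,\ \bfeta\not< f)=0$. The idea is to approximate from below: for $\eps>0$ consider $f-\eps$ (the pointwise shift, which again lies in $\barE$ since $f\le 0$ forces $f-\eps<0$), so that $\{\bfeta\le f-\eps\}\uparrow\{\bfeta<f\}$ as $\eps\downarrow 0$ — this monotone-limit identity uses that $\bfeta$ has continuous paths and $f$ has only finitely many discontinuities, so $\eta_t<f(t)$ for all $t$ is equivalent to $\sup_t(\eta_t-f(t))<0$, hence to $\eta_t\le f(t)-\eps$ for all $t$ for some $\eps>0$. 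Then by lower continuity of the measure and \eqref{eq:distribution_function_of_standard_MSP},
\begin{align*}
P(\bfeta<f)&=\lim_{\eps\downarrow 0}P(\bfeta\le f-\eps)=\lim_{\eps\downarrow 0}\exp\left(-\norm{f-\eps}_D\right)=\exp\left(-\norm f_D\right)=P(\bfeta\le f),
\end{align*}
where $\norm{f-\eps}_D\to\norm f_D$ follows exactly as in part (i) from $\norm{(f-\eps)-f}_D=\eps\norm 1_D=\eps m\to 0$.

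Finally, to deduce that $A_f:=\{g\in\barC:g(t)\le f(t)\ \forall t\}$ is a continuity set for the law of $\bfeta$ on $(\barC,\norm\cdot_\infty)$, I would identify its topological boundary: the interior of $A_f$ (in the sup-norm topology on $\barC$) is contained in $\{g:g<f\}$ — more precisely one checks $\partial A_f\subset A_f\setminus\{g:g<f\}=\{g\in\barC:g\le f,\ g(t)=f(t)\text{ for some }t\}$ — and by the identity just proved $P(\bfeta\in A_f\setminus\{g:g<f\})=P(\bfeta\le f)-P(\bfeta<f)=0$, so $P(\bfeta\in\partial A_f)=0$. The one place that needs a little care, and which I'd flag as the main (though minor) obstacle, is the passage "$\bfeta<f$ pointwise $\iff$ $\bfeta\le f-\eps$ for some $\eps>0$": the forward direction is where continuity of the paths and finiteness of the discontinuity set of $f$ genuinely enter, since on each closed subinterval where $f$ is continuous the function $t\mapsto f(t)-\eta_t(\omega)$ is continuous and attains its infimum, while at the finitely many jump points of $f$ one handles the values directly — combining finitely many positive $\eps$'s gives a single one that works.
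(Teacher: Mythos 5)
Your part (i) is fine and is exactly what the paper intends (the paper states this lemma without a written proof, remarking only that it is ``essentially due to'' \eqref{sup_f_D_sandwich}): the reverse triangle inequality for $\norm\cdot_D$ on $E[0,1]$ together with $\norm{f_n-f}_D\le m\norm{f_n-f}_\infty$ gives $G(f_n)\to G(f)$.

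In part (ii), however, the step you yourself flag is genuinely false as stated. For general $f\in\barE$ the equivalence ``$\bfeta<f$ pointwise $\iff$ $\bfeta\le f-\eps$ for some $\eps>0$'' does not hold, and neither does the inclusion $\set{g\in\barC:\,g<f}\subset A_f^\circ$ (equivalently $\partial A_f\subset\set{g\le f:\ g(t)=f(t)\text{ for some }t}$) that you use for the continuity-set claim. Counterexample: let $f(t)=-1/2$ for $t\ne 1/2$, $f(1/2)=-1/4$ (an element of $\barE$ with one discontinuity), and let $g(t)=-1/2-\abs{t-1/2}\in\barC$. Then $g<f$ everywhere, but $\inf_{t}\bigl(f(t)-g(t)\bigr)=0$, so $g\le f-\eps$ fails for every $\eps>0$; moreover every sup-norm ball around $g$ contains functions violating $g\le f$, so $g\in\partial A_f$ although $g<f$. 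The reason your justification breaks down is that the maximal intervals on which an $E[0,1]$-function is continuous are not closed, and at a jump point the nearby values of $f$ need not be related to $f(t_0)$ (one-sided limits need not even exist), so $f-\eta$ can approach $0$ near a discontinuity without ever vanishing.

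The conclusions of (ii) nevertheless survive, and the repair uses only what you already have. For the first claim you only need the trivial inclusions $\set{\bfeta\le f-\eps}\subset\set{\bfeta<f}\subset\set{\bfeta\le f}$: by part (i), $P(\bfeta\le f-\eps)=\exp\left(-\norm{f-\eps}_D\right)\to\exp\left(-\norm f_D\right)=P(\bfeta\le f)$ as $\eps\downarrow0$, which squeezes $P(\bfeta<f)=P(\bfeta\le f)$; no set identity for $\set{\bfeta<f}$ is needed. For the continuity-set statement, replace your boundary inclusion by the correct one $\bigcup_{\eps>0}\set{g\in\barC:\,g\le f-\eps}\subset A_f^\circ$ (a uniform gap $\eps$ does yield an interior point of $A_f$), whence $P(\bfeta\in\partial A_f)\le P(\bfeta\le f)-\lim_{\eps\downarrow0}P(\bfeta\le f-\eps)=0$. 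With these two substitutions your argument is complete and is in the spirit of the paper's one-line attribution to \eqref{sup_f_D_sandwich}.
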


The next auxiliary result provides some properties of the  survivor function  of a standard MSP.

\begin{lemma}\label{lem:expansion_of_survivor_function_of_eta}
Let $\bfeta$ be a standard MSP with generator $\bfZ$. Then we
obtain for $f\in\bar E^-[0,1]$:
\begin{itemize}\itemsep4mm
\item[(i)]
$\displaystyle
P(\bfeta>f) \ge 1-\exp\left(-E\left(\inf_{0\le t\le
1}(\abs{f(t)}Z_t)\right)\right);
$
\item[(ii)]
$\displaystyle
\lim_{s\downarrow 0}\frac{P(\bfeta > sf)}s=E\left(\inf_{0\le t\le
1}(\abs{f(t)}Z_t)\right).
$

\end{itemize}
\end{lemma}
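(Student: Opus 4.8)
The plan is to reduce both parts to the already-established distributional formula $P(\bfeta\le f)=\exp(-\norm f_D)$ of Lemma \ref{prop:distribution_function_of_standard_MSP} together with an inclusion--exclusion / Bonferroni argument at the level of the fidis. For part (i), I would first discretize: pick a countable dense set $Q=\{q_1,q_2,\dots\}\subset[0,1]$ containing the finitely many discontinuities of $f$, and observe by continuity of $\bfeta$ that $\{\bfeta>f\}=\bigcap_{d}\{\exists j\le d:\eta_{q_j}>f(q_j)\}$ up to a null set, hence $P(\bfeta>f)=\lim_{d\to\infty}P\big(\bigcup_{j\le d}\{\eta_{q_j}>f(q_j)\}\big)$. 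For each fixed $d$, the lower Bonferroni bound gives
\[
P\Big(\bigcup_{j\le d}\{\eta_{q_j}>f(q_j)\}\Big)\ \ge\ 1-P\big(\eta_{q_j}\le f(q_j),\ 1\le j\le d\big)\ =\ 1-\exp\Big(-E\big(\max_{1\le j\le d}(\abs{f(q_j)}Z_{q_j})\big)\Big),
\]
using \eqref{eqn:definition_of_D-norm}. Since $\max_{j\le d}(\abs{f(q_j)}Z_{q_j})\ge \min_{j\le d}(\abs{f(q_j)}Z_{q_j})\ge \inf_{t\in[0,1]}(\abs{f(t)}Z_t)$ (the last inequality being valid because $Z$ is continuous and $Q$ dense, so the infimum over $[0,1]$ is not increased by restricting to $Q$ --- here one uses that $\abs{f}$ has only finitely many discontinuities, all in $Q$), monotonicity of $\exp$ and $E$ yields the bound $1-\exp(-E(\inf_{t}(\abs{f(t)}Z_t)))$ uniformly in $d$; letting $d\to\infty$ finishes (i).

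For part (ii), I would apply (i) to $sf$ in place of $f$: since $\inf_t(\abs{sf(t)}Z_t)=s\inf_t(\abs{f(t)}Z_t)$, part (i) gives the lower bound
\[
\frac{P(\bfeta>sf)}{s}\ \ge\ \frac{1-\exp(-s\,E(\inf_t(\abs{f(t)}Z_t)))}{s}\ \xrightarrow[s\downarrow 0]{}\ E\Big(\inf_{0\le t\le 1}(\abs{f(t)}Z_t)\Big),
\]
using $(1-e^{-sc})/s\to c$. For the matching upper bound I would again discretize and use the \emph{upper} Bonferroni inequality $P(\bigcup_{j\le d}A_j)\le \sum_{j\le d}P(A_j)$ with $A_j=\{\eta_{q_j}>sf(q_j)\}$; here $P(\eta_{q_j}>sf(q_j))=1-\exp(s f(q_j))=1-\exp(-s\abs{f(q_j)})$ since $f\le 0$ and $\eta_{q_j}$ has standard negative exponential margins. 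Dividing by $s$ and letting $s\downarrow0$ gives $\limsup_{s\downarrow0}P(\bfeta>sf)/s\le \sum_{j\le d}\abs{f(q_j)}$ for every $d$ --- but this crude bound diverges, so a plain union bound over all of $Q$ is too lossy. Instead I would keep $d$ fixed and combine: $P(\bfeta>sf)=P(\max_{t\in[0,1]}(\eta_t-sf(t))>0)$, and approximate this event by the finite-dimensional events to get $\limsup_{s\downarrow0}P(\bfeta>sf)/s\le \lim_{d}\frac{1}{s}P(\exists j\le d:\eta_{q_j}>sf(q_j))$; for each fixed $d$ the finite-dimensional quantity $\frac1s P(\max_{j\le d}(\abs{sf(q_j)}\,(-\eta_{q_j})^{-1})\dots)$ --- more cleanly, I would invoke the known finite-dimensional expansion of the survivor function of a multivariate EVD with negative exponential margins (this is exactly the $d$-dimensional analogue, $\lim_{s\downarrow0}s^{-1}P(\exists j:\eta_{q_j}>sx_j)=E(\min_{j\le d}(\abs{x_j}Z_{q_j}))$, which follows from inclusion--exclusion applied to the exact fidi formula \eqref{eqn:finite_distribution_of_MSP}). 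This gives $\limsup_{s\downarrow0}P(\bfeta>sf)/s\le E(\min_{1\le j\le d}(\abs{f(q_j)}Z_{q_j}))$ for every $d$, and then $d\to\infty$ with monotone/dominated convergence (the sequence $\min_{j\le d}(\abs{f(q_j)}Z_{q_j})$ decreases to $\inf_{t\in[0,1]}(\abs{f(t)}Z_t)$ a.s., using continuity of $Z$ and density of $Q$) yields the upper bound $E(\inf_t(\abs{f(t)}Z_t))$, matching the lower bound.

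\textbf{Main obstacle.} The routine part is the Bonferroni bookkeeping and the scalar limit $(1-e^{-sc})/s\to c$. The delicate point is the passage from the finite-dimensional survivor expansion to the functional one in the upper bound of (ii): one must justify that $\frac1s P(\bfeta>sf)$ is genuinely squeezed between the $d$-dimensional quantities and that $\min_{j\le d}(\abs{f(q_j)}Z_{q_j})\downarrow\inf_{t\in[0,1]}(\abs{f(t)}Z_t)$ pathwise --- which relies on the continuity of the generator $\bfZ$ (Proposition \ref{prop:characterization_of_MSP}) and on $f$ having only finitely many discontinuities, all absorbed into $Q$. The interchange of $\lim_{d}$ and $E$ is then handled by dominated convergence, dominating $\min_{j\le d}(\abs{f(q_j)}Z_{q_j})\le \norm f_\infty\max_{t}Z_t$, which is integrable by \eqref{eqn:properties_of_generator}. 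I expect the continuity-of-$Z$ step to be where the real care is needed; everything else is standard.
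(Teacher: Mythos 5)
There is a genuine gap, and it begins with the meaning of the event $\set{\bfeta>f}$. In this paper operations on functions are meant pointwise, so $\set{\bfeta>f}=\set{\eta_t>f(t)\text{ for all }t\in[0,1]}$; its complement is $\set{\eta_t\le f(t)\text{ for some }t}$, which is strictly larger than $\set{\bfeta\le f}$. Your proof of (i) treats $\set{\bfeta>f}$ as the complement of $\set{\bfeta\le f}$: the quantity $\lim_{d\to\infty}P\bigl(\bigcup_{j\le d}\set{\eta_{q_j}>f(q_j)}\bigr)$ is the probability that $\bfeta$ exceeds $f$ \emph{somewhere}, which equals $1-\exp(-\norm f_D)$. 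The bound you derive is therefore a statement about the wrong event (and a trivially true one, since $\norm f_D\ge E\bigl(\inf_t(\abs{f(t)}Z_t)\bigr)$); it does not bound $P(\eta_t>f(t)\ \forall t)$ from below. That the pointwise reading is the intended one is confirmed by the paper's proof, which explicitly rewrites $P(\eta_t>f(t)\text{ for all }t)$ as $1-P(\eta_t\le f(t)\text{ for some }t)$, and by part (ii) itself: under your reading the limit in (ii) would be $\norm f_D=E\bigl(\sup_t(\abs{f(t)}Z_t)\bigr)$ rather than $E\bigl(\inf_t(\abs{f(t)}Z_t)\bigr)$. Lower-bounding the ``everywhere'' event is the real content of (i), and it cannot be reached by complementation/Bonferroni from $P(\bfeta\le f)=\exp(-\norm f_D)$ alone; the paper uses the Poisson point process representation of the simple MSP $\bfxi=-1/\bfeta$ (Gin\'e et al., Proposition 3.2): the event that $\bfeta$ dips below $f$ somewhere is contained in the event that no Poisson point $\bfY_i$ satisfies $\bfY_i(t)>1/\abs{f(t)}$ for all $t$, and the void probability of that set is exactly $\exp\bigl(-E\bigl(\inf_t(\abs{f(t)}Z_t)\bigr)\bigr)$. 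Your proposal contains no substitute for this step.

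The same confusion affects the upper bound in (ii): the finite-dimensional limit you invoke, $\lim_{s\downarrow0}s^{-1}P(\exists j\le d:\eta_{q_j}>sx_j)=E\bigl(\min_{j\le d}(\abs{x_j}Z_{q_j})\bigr)$, is false as stated --- that limit equals $E\bigl(\max_{j\le d}(\abs{x_j}Z_{q_j})\bigr)$, and letting $d\to\infty$ would only give the useless bound $\norm f_D$. The correct statement concerns the joint survivor $P(\eta_{q_j}>sx_j,\ 1\le j\le d)$, which contains $\set{\bfeta>sf}$ and whose right derivative at $s=0$ is $E\bigl(\min_{j\le d}(\abs{x_j}Z_{q_j})\bigr)$ by inclusion--exclusion applied to the fidis; this, together with $\min_{j\le d}(\abs{f(q_j)}Z_{q_j})\downarrow\inf_t(\abs{f(t)}Z_t)$ by continuity of $\bfZ$ and dominated convergence, is precisely the paper's argument. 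With that correction your upper-bound half of (ii) would coincide with the paper's, but the lower-bound half rests on (i), which your proposal does not actually prove.
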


Note that it is easy to find examples of standard MSP $\bfeta$ and $f\in\bar E^-[0,1]$ with a strict inequality in part (i) of the preceding lemma.

\begin{proof}
Due to the continuity of $\bfeta$ and $\bfZ$ it is sufficient to consider $f\in\bar E^-[0,1]$ with $\sup_{0\le t\le 1}f(t)<0$. From Subsection \ref{subsec:arbitrary_margins} below we know that
\[
\xi_t:=-\frac 1{\eta_t},\qquad 0\le t\le 1,
\]
defines a continuous max-stable process $\bfxi=(\xi_t)_{0\le t\le
1}$ on $[0,1]$ with standard Fr\'{e}chet margins and Proposition 3.2
in Gin\'{e} et. al \cite{ginhv90} yields
\[
\bfxi=_D \max_i \bfY_i
\]
in $\bar C^+[0,1]$, where $\bfY_1,\bfY_2,\dots$ are the points
(functions in $\bar C^+[0,1]$) of a Poisson process $N$ with
intensity measure $\nu$ given by $d\nu=d\sigma\times dr/r^2$ on
$\bar C_1^+[0,1]\times (0,\infty)=:C[0,1]^+=\set{h\in C[0,1]:\,h\ge
0,\,h\not=0}$. By $\bar C_1^+[0,1]$ we denote the space of those
functions $h$ in $\bar C^+[0,1]$ with $\norm h_\infty=\sup_{0\le
t\le 1}\abs{h(t)}=1$. The (finite) measure $\sigma$ is given by
$\sigma(\cdot)=mP(\tilde \bfZ\in\cdot)$, where
$\tilde\bfZ:=\bfZ/m$ and $m$ is the generator constant pertaining
to $\bfZ$. Note that $m$ coincides with the total mass of
$\sigma$.\\
Observe $P\left(\eta_{t}> f(t), \text{ for all }t\in[0,1]\right)=1-P\left(\eta_{t}\leq f(t), \text{ for some }t\in[0,1]\right)$, and we obtain
\begin{align*}
&P\left(\eta_{t}\leq f(t), \text{ for some }t\in[0,1]\right)\\
&=P\left(\xi_{t}\le \frac 1{\abs{f(t)}}, \text{ for some }t\in[0,1]\right)\\
&=P\left(\text{for some }t\in[0,1],\, \forall i\in\N: \bfY_i(t)\le \frac 1{\abs{f(t)}},\, \forall i\in\N\right)\\
&\le P\left(\forall i\in\N,\,\text{for some }t\in[0,1]: \bfY_i(t)\le \frac 1{\abs{f(t)}}\right)\\
&=P\left(N\left(\set{g\in C[0,1]^+:\,g(t)> \frac 1{\abs{f(t)}},\,t\in[0,1]}\right)=0\right)\\
&=\exp\left(-\nu\left(\set{g\in C[0,1]^+:\,g(t)\abs{f(t)}>1,\,t\in[0,1]}\right)\right)\\
&=\exp\left(-\nu\left(\set{(h,r)\in\bar C_1^+[0,1]\times (0,\infty):\, r h(t)\abs{f(t)} > 1,\, t\in[0,1]}\right)\right)\\
&=\exp\left(-\int_{\set{(h,r)\in\bar C_1^+[0,1]\times (0,\infty):\, r h(t)\abs{f(t)} > 1,\, t\in[0,1]}} \frac 1{r^2}\,dr\,\sigma(dh)\right)\\
&=\exp\left(-\int_{\bar C_1^+[0,1]}\int_{1/\inf_{t\in[0,1]}(h(t)\abs{f(t)})}^\infty  \frac 1{r^2}\,dr\,\sigma(dh)\right)\\
&=\exp\left(-\int_{\bar C_1^+[0,1]} \inf_{t\in[0,1]}(h(t)\abs{f(t)}) \,\sigma(dh)\right)\\
&=\exp\left(-E\left(\inf_{t\in[0,1]}(\abs{f(t)}Z_t)\right)\right).
\end{align*}
which is assertion $(i)$. Next we establish the inequality
\begin{equation}\label{eqn:P(eta>s)<E(min_Z)}
\limsup_{s\downarrow 0}\frac{P(\bfeta > sf)}s\le E\left(\min_{1\le j\le m}\left(\abs{f(t_j)}Z_{t_j}\right)\right),
\end{equation}
where $\{t_1,t_2,\ldots\}$ is a denumerable dense subset of [0,1], which contains those finitely many points $t_i$ at which the function $f$ is discontinouous. Not that in this case $\lim_{n\to\infty}E\left(\min_{1\le j\le m}\left(\abs{f(t_j)}Z_{t_j}\right)\right)= E\left(\min_{t\in[0,1]}\left(\abs{f(t)}Z_{t}\right)\right)$ because of the dominated convergence theorem.

The inclusion-exclusion theorem implies
\begin{align*}
&P(\bfeta>sf)\\
&\le P\left(\bigcap_{j=1}^m\set{\eta_{t_j}>sf(t_j)}\right)\\
&=1- P\left(\bigcup_{j=1}^m\set{\eta_{t_j}\le sf(t_j)}\right)\\
&=1-\sum_{\emptyset\not= T\subset\set{1,\dots,m}} (-1)^{\abs T - 1} P\left(\bigcap_{j\in T}\set{\eta_{t_j}\le sf(t_j)}\right)\\
&=1-\sum_{\emptyset\not= T\subset\set{1,\dots,m}} (-1)^{\abs T - 1} \exp\left(- s E\left( \max_{j\in T}\left(\abs{f(t_j)}Z_{t_j}\right)\right)\right)\\
&=:1-H(s)\\
&=H(0)-H(s),
\end{align*}
where the function $H$ is differentiable and, thus,
\begin{align*}
\limsup_{s\downarrow 0}\frac{P(\bfeta > sf)}s&\le -\lim_{s\downarrow 0}\frac{H(s)-H(0)}s\\
&=-H'(0)\\
&= \sum_{\emptyset\not= T\subset\set{1,\dots,m}} (-1)^{\abs T - 1}   E\left( \max_{j\in T}\left(\abs{f(t_j)}Z_{t_j}\right)\right)\\
&= E\left( \min_{j\in T}\left(\abs{f(t_j)}Z_{t_j}\right)\right),
\end{align*}
since $\sum_{\emptyset\not= T\subset\set{1,\dots,m}} (-1)^{\abs T - 1} \max_{j\in T}a_j=\min_{1\le j\le m}a_j$ for arbitrary numbers $a_1,\dots,a_m\in\R$, which can be seen by induction. This implies equation \eqref{eqn:P(eta>s)<E(min_Z)}. Part $(ii)$ is now a straightforward consequence of $(i)$ and \eqref{eqn:P(eta>s)<E(min_Z)}.
 
\end{proof}

\subsection{Transformation to Arbitrary Margins}\label{subsec:arbitrary_margins}

Next we recall that the characterization in Proposition \ref{prop:characterization_of_MSP} is
sufficient to cover all max-stable processes in $C[0,1]$.

Let $\bfzeta$ be an arbitrary max-stable process in $C[0,1]$ and $a>0$, $b$, $\gamma$
the continuous functions, for which the von Mises representation
\begin{equation*}
P\left(\frac{\bfzeta(t)-b(t)}{a(t)}\leq
x\right)=\exp\left(-(1+\gamma(t) x)^{-1/\gamma(t)}\right),\qquad t\in[0,1],
\end{equation*}
holds, cf. equation \eqref{eq:vonMisesRep}.

We deduce from Gin\'{e} et al. \cite{ginhv90} that the transformation
\begin{equation}\label{eq:transformationtostandardcase}
\bfeta(t):= \left\{ \begin{array}{ll}
    -\left(1+ \frac{\gamma(t)}{a(t)}(\bfzeta(t)-b(t))  \right)^{-1/\gamma(t)} & \textrm{for } \gamma(t)\neq0 \\[2mm]
    -\exp\left(-(\bfzeta(t)-b(t))/a(t)\right)& \textrm{for } \gamma(t)=0.
\end{array}\right.
\end{equation}
is well-defined and continuous, and elementary transformations yield that $\bfeta$ is a standard MSP.

By inverting equation \eqref{eq:transformationtostandardcase} we get 
\begin{equation*}
\bfzeta(t)= \left\{ \begin{array}{ll}
    \frac{-a(t)}{\gamma(t)} \left(1- \left(-\bfeta(t)\right)^{-\gamma(t)}\right)+b(t)   & \textrm{ for } \gamma(t)\neq0 \\[2mm]
    -a(t)\log\left(-\bfeta(t)\right)+b(t)& \textrm{ for } \gamma(t)=0.
\end{array}\right.
\end{equation*}
for $t\in[0,1]$.

Thus, the functional df of an arbitrary max-stable process $\bfzeta$ in $C[0,1]$ can be written by means of the $D$-norm: we get for  $f\in E[0,1]$ 
\begin{eqnarray*}
P(\bfzeta \leq f) &=& P\left(\bfeta\leq \PsiO(f)\right)\\[3mm]
   &=& \exp\left(-\norm{\PsiO(f)}_D\right), 
\end{eqnarray*}
where we define for functions $f\in E[0,1]$
\begin{equation*}
\PsiO(f):= \PsiO(f(t)):=\left\{ \begin{array}{ll}
    -\left(1+ \frac{\gamma(t)}{a(t)}(f(t)-b(t))  \right)^{-1/\gamma(t)} & \textrm{for } \gamma(t)\neq0 \\[2mm]
    -\exp\left(-(f(t)-b(t))/a(t)\right)& \textrm{for } \gamma(t)=0.
\end{array}\right.
\end{equation*}

\section{Functional Domain of Attraction}\label{sec:FDA}
\subsection{Functional Domain of Attraction of a standard MSP}

We say that a stochastic process $\bfY$ in $C[0,1]$ is \emph{in the
functional domain of attraction} of a standard MSP   $\bfeta$,
denoted by $\bfY\in \mathcal D(\bfeta)$, if there are functions
$a_n\in C^+[0,1]:=\set{f\in C[0,1]:f>0}$, $b_n\in C[0,1]$, $n\in\N$, such that
\begin{equation}\label{cond:definition_of_domain_of_attraction}
\lim_{n\to\infty}P\left(\frac{\bfY-b_n}{a_n}\le f\right)^n =
P(\bfeta\le f)=\exp\left(-\norm f_D\right)
\end{equation}
for any $f\in\barE$. Note that this condition is equivalent with
\begin{equation}\label{cond:_equivalent_definition_of_domain_of_attraction}
 \lim_{n\to\infty} P\left(\max_{1\le i\le n}\frac{\bfY_i-b_n}{a_n}\le f\right)=P(\bfeta\le f) \tag{\ref{cond:definition_of_domain_of_attraction}'}
\end{equation}
for any $f\in\barE$, where $\bfY_1,\bfY_2,\dots$ are independent
copies of $\bfY$.\par
\bigskip
Due to the continuity of the functional df of $\bfeta$, we get immediately the following assertion.

\begin{lemma}\label{open_convergence}
 There is $\bfY\in \mathcal D(\bfeta)$ for some standard EVP $\bfeta$, i.e. \eqref{cond:_equivalent_definition_of_domain_of_attraction} holds, if, and only if
\begin{equation}
\lim_{n\to\infty}P\left( \max_{1\leq i\leq n}(\bfY_i-b_n)/a_n < f\right) = P\left( \bfeta < f\right)
\label{eq:open_convergence}
\end{equation} for every $f\in\barE$, with $\bfY_i,\,a_n,\,b_n$ as before.
\end{lemma}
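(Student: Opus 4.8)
The plan is to exploit the equivalence between the two displayed conditions and the continuity of the limiting distribution function $G(f)=P(\bfeta\le f)=\exp(-\norm f_D)$ established in Lemma \ref{dfofEVPcontinuous}. Write $M_n:=\max_{1\le i\le n}(\bfY_i-b_n)/a_n$ for the normalized maximum process. Since $\set{M_n<f}\subset\set{M_n\le f}$ pointwise, we always have $P(M_n<f)\le P(M_n\le f)$, and likewise $P(\bfeta<f)\le P(\bfeta\le f)$. This gives one-sided control in both directions for free; the task is to upgrade this to genuine equivalence of the limits.

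First I would prove the forward implication: assume \eqref{cond:_equivalent_definition_of_domain_of_attraction}, i.e. $P(M_n\le f)\to G(f)$ for all $f\in\barE$. Fix $f\in\barE$ and pick, for $\eps>0$, the perturbed function $f-\eps$ (still in $\barE$, still nonpositive after perhaps replacing it by $\min(f-\eps,0)$ — actually since we only need $f\in\barE$ and $f\le 0$, note $f-\eps\le f\le 0$, so $f-\eps\in\barE$ automatically). Then $\set{M_n\le f-\eps}\subset\set{M_n<f}$, hence
\begin{align*}
G(f-\eps)=\lim_{n\to\infty}P(M_n\le f-\eps)\le\liminf_{n\to\infty}P(M_n<f)\le\limsup_{n\to\infty}P(M_n<f)\le\limsup_{n\to\infty}P(M_n\le f)=G(f).
\end{align*}
Now let $\eps\downarrow 0$. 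By continuity of $G$ with respect to the sup-norm (Lemma \ref{dfofEVPcontinuous}(i)) and $\norm{(f-\eps)-f}_\infty=\eps\to 0$, we get $G(f-\eps)\to G(f)$, which squeezes $\lim_{n\to\infty}P(M_n<f)=G(f)$. Finally, by Lemma \ref{dfofEVPcontinuous}(ii), $G(f)=P(\bfeta\le f)=P(\bfeta<f)$, so \eqref{eq:open_convergence} holds. The converse direction is entirely symmetric: assuming \eqref{eq:open_convergence}, use $\set{M_n\le f}\subset\set{M_n<f+\eps}$ for $\eps>0$ (note $f+\eps$ need not be nonpositive, so instead take $f_\eps:=\min(f+\eps,0)\in\barE$, for which still $\set{M_n\le f}\subset\set{M_n<f_\eps}$ since $f\le 0\le f+\eps$ forces $f\le f_\eps$ pointwise wherever $f<0$... more carefully, $f\le f_\eps$ always and the inequality is strict where $f<0$; since $P(\bfeta<f)=P(\bfeta\le f)$ and a.s. $\bfeta<0$, the boundary contributes nothing) to obtain $\limsup_n P(M_n\le f)\le P(\bfeta<f_\eps)=G(f_\eps)$, then let $\eps\downarrow 0$ and invoke sup-norm continuity of $G$ together with Lemma \ref{dfofEVPcontinuous}(ii) again.

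The only delicate point — and the one I would treat most carefully in the write-up — is the handling of the perturbation $f+\eps$ in the converse direction, since $\barE$ consists of \emph{nonpositive} functions and adding $\eps$ may violate that constraint near points where $f$ touches $0$. The clean fix is to perturb only where it is harmless: replace $f+\eps$ by $f_\eps:=\min(f+\eps,\,0)$, which lies in $\barE$ (it is bounded with finitely many discontinuities), satisfies $f\le f_\eps$ pointwise and $\norm{f_\eps-f}_\infty\le\eps$, and for which $\set{M_n\le f}\subset\set{M_n< f_\eps}$ holds because where $f(t)=0$ the process $\bfeta$ is strictly negative almost surely (standard negative exponential margins, or Lemma \ref{standardMSPnotzero}) so such points carry no mass and can be ignored in the limit — alternatively, one observes directly $\set{M_n\le f}\subset\set{M_n\le f_\eps}$ and then, on the set where the strict inequality $f<f_\eps$ fails (i.e. where $f=0$), compares with $P(\bfeta\le f_\eps)\le P(\bfeta\le 0)=1$, which is vacuous, and uses instead the continuity-set property in Lemma \ref{dfofEVPcontinuous}(ii). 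This is routine but worth a sentence. Everything else is a two-sided squeeze, and no obstacle of substance remains once sup-norm continuity of $G$ is in hand.
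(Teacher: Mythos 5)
Your forward implication is correct and is exactly the paper's argument: sandwich $P(\bfX_n\le f-\eps)\le P(\bfX_n<f)\le P(\bfX_n\le f)$ with $\bfX_n:=\max_{1\le i\le n}(\bfY_i-b_n)/a_n$, use sup-norm continuity of $G$ from Lemma \ref{dfofEVPcontinuous}(i) to let $\eps\downarrow0$, and identify $P(\bfeta<f)=P(\bfeta\le f)$ via Lemma \ref{dfofEVPcontinuous}(ii). (Note $f-\eps\in\barE$ needs no truncation, as you observe.) The paper writes out only this direction and dismisses the converse as ``analogous''.

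The gap is in your converse, precisely at the point you flag. The inclusion $\set{\bfX_n\le f}\subset\set{\bfX_n<f_\eps}$ with $f_\eps:=\min(f+\eps,0)$ is simply false whenever $f$ attains the value $0$: at such $t$ one has $f_\eps(t)=0$ and you only know $\bfX_n(t)\le 0$, and $\bfX_n$ --- a normalized maximum of the $\bfY_i$, not the limit process --- may equal $0$ there with non-negligible probability. Your justification (``where $f(t)=0$ the process $\bfeta$ is strictly negative a.s., so such points carry no mass'') transfers a property of the limit $\bfeta$ to the prelimit $\bfX_n$, which is exactly the kind of conclusion the lemma is supposed to deliver, and your alternative via $P(\bfeta\le 0)=1$ is, as you admit, vacuous; invoking Lemma \ref{dfofEVPcontinuous}(ii) again does not bound $P(\bfX_n\le f)-P(\bfX_n<f_\eps)$ uniformly in $n$. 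The missing idea is to extract the needed statement about $\bfX_n$ from the hypothesis \eqref{eq:open_convergence} itself: apply it to the constant functions $g\equiv-\delta$, giving $P(\bfX_n<-\delta\bfone)\to\exp(-\delta\norm\bfone_D)$, hence $\limsup_{n\to\infty}P\left(\exists t:\ \bfX_n(t)\ge 0\right)\le 1-\exp(-\delta\norm\bfone_D)$ for every $\delta>0$, so $P\left(\exists t:\ \bfX_n(t)\ge 0\right)\to0$. Since $\set{\bfX_n\le f}\cap\set{\bfX_n(t)<0\ \forall t}\subset\set{\bfX_n<f_\eps}$ does hold, one gets $P(\bfX_n\le f)\le P(\bfX_n<f_\eps)+P\left(\exists t:\ \bfX_n(t)\ge 0\right)$, and letting $n\to\infty$, then $\eps\downarrow0$ and using sup-norm continuity of $G$ closes the squeeze; the lower bound $\liminf_n P(\bfX_n\le f)\ge P(\bfeta<f)=P(\bfeta\le f)$ is as you state. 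The paper's terse ``analogous arguments'' glosses over the same boundary issue, but as written your proof rests on a false set inclusion and needs this extra step.
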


\begin{proof}
Set $\bfX_n:=\max_{1\leq i\leq n}(\bfY_i-b_n)/a_n$. If \eqref{cond:_equivalent_definition_of_domain_of_attraction} holds, we get the inequality
\begin{equation*}
\limsup_{n\to \infty}P(\bfX_n<f) \leq \lim_{n\to\infty}P(\bfX_n\leq f) = P(\bfeta\leq f) = P\left( \bfeta \leq f\right)
\end{equation*}
for every $f\in\barE$, see Lemma \ref{dfofEVPcontinuous}.

On the other hand, for all $f\in\barE$ and every $\varepsilon>0$:
\begin{equation*}
P(\bfeta\leq f-\varepsilon)= \lim_{n\to\infty}P(\bfX_n\leq f-\varepsilon) \leq \liminf_{n\to \infty}P(\bfX_n<f).
\end{equation*}
As $G(f)=P(\bfeta\leq f)$ is continuous in $f$ with respect to the $\sup-$norm, cf. Lemma \ref{dfofEVPcontinuous}, \eqref{eq:open_convergence} follows. The reverse implication follows with analogous arguments.
 \end{proof}

There should be no risk of confusion with the notation of domain
of attraction in the sense of weak convergence of stochastic
processes as investigated in de Haan and Lin \cite{dehal01}. But to
distinguish between these two approaches we will consistently
speak of \emph{functional} domain of attraction in this paper,
if the above definition is meant. Actually, this definition of
domain of attraction is less restrictive as the next lemma shows.

\begin{proposition}\label{weakimpliesfunctional}
Suppose that $\bfY$ in $\bar C^-[0,1]$ and let $\bfY_1,\bfY_2,\dots$ be independent copies of $\bfY$. If the sequence  $\bfX_n=\max_{1\le i\le
n}\left(\left(\bfY_i-b_n)/a_n\right)\right)$ of continuous processes converges weakly in
$\bar C^-[0,1]$, equipped with the sup-norm $\norm\cdot_\infty$,
to the standard MSP $\bfeta$, then $\bfY\in\mathcal D(\bfeta)$.
\end{proposition}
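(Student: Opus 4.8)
The plan is to deduce the functional domain of attraction condition \eqref{cond:definition_of_domain_of_attraction} directly from weak convergence by testing the latter against a suitable family of closed sets. For $f \in \bar E^-[0,1]$ consider the set $A_f := \set{g \in \bar C^-[0,1] : g(t) \le f(t)\text{ for all } t \in [0,1]}$. This set is closed in $(\bar C^-[0,1],\norm\cdot_\infty)$: if $g_k \to g$ uniformly and $g_k \le f$ pointwise, then $g \le f$ pointwise. By the Portmanteau theorem, weak convergence $\bfX_n \To \bfeta$ gives $\limsup_{n\to\infty} P(\bfX_n \in A_f) \le P(\bfeta \in A_f)$, i.e.
\[
\limsup_{n\to\infty} P(\bfX_n \le f) \le P(\bfeta \le f).
\]
That is one of the two inequalities needed.

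For the reverse inequality I would use the \emph{open} version. For $\eps > 0$ the set $U_{f} := \set{g \in \bar C^-[0,1] : g(t) < f(t) \text{ for all } t \in [0,1]}$ is open in $(\bar C^-[0,1],\norm\cdot_\infty)$, since a uniformly small perturbation of a function that is strictly below $f$ — at least when $f$ is continuous and bounded away from $f$'s graph — stays below $f$; more carefully, for $f \in \bar E^-[0,1]$ one applies this to $f - \eps$ (the constant shift), noting $\set{g : g < f} \supseteq \set{g \le f - \eps}$ is not quite open but $\set{g : g(t) < f(t)\ \forall t}$ genuinely is open when the infimum of $f - g$ over the compact $[0,1]$ is positive, which holds precisely on that set. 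Then Portmanteau gives $\liminf_{n\to\infty} P(\bfX_n \in U_f) \ge P(\bfeta \in U_f)$, and since $\set{\bfX_n \le f - \eps} \subseteq \set{\bfX_n \in U_f} \subseteq \set{\bfX_n \le f}$ we obtain
\[
\liminf_{n\to\infty} P(\bfX_n \le f) \ge \liminf_{n\to\infty} P(\bfX_n < f) \ge P(\bfeta < f) \ge P(\bfeta \le f - \eps).
\]
Letting $\eps \downarrow 0$ and invoking the continuity of $G(f) = P(\bfeta \le f) = \exp(-\norm f_D)$ with respect to $\norm\cdot_\infty$ — which is Lemma \ref{dfofEVPcontinuous}(i), using $\norm{f - \eps}_D \to \norm f_D$ from \eqref{sup_f_D_sandwich} — yields $\liminf_{n\to\infty} P(\bfX_n \le f) \ge P(\bfeta \le f)$.

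Combining the two inequalities gives $\lim_{n\to\infty} P(\bfX_n \le f) = P(\bfeta \le f)$ for every $f \in \bar E^-[0,1]$, which is exactly \eqref{cond:_equivalent_definition_of_domain_of_attraction}, hence $\bfY \in \mathcal D(\bfeta)$. Alternatively, one can cite Lemma \ref{dfofEVPcontinuous}(ii): the sets $A_f$ are continuity sets for the law of $\bfeta$, so the Portmanteau theorem gives convergence $P(\bfX_n \in A_f) \to P(\bfeta \in A_f)$ in one step, and Lemma \ref{open_convergence} shows this is equivalent to the stated condition.

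The main obstacle I anticipate is the care needed in identifying which sublevel sets $\set{g \le f}$ are closed and which superlevel-type sets are open in $\bar C^-[0,1]$ when $f$ itself is only in $\bar E^-[0,1]$ (allowing finitely many discontinuities) rather than continuous: one has to argue on the ambient space of \emph{continuous} nonpositive functions where $\bfX_n$ and $\bfeta$ actually realize, so the discontinuities of $f$ cause no trouble for closedness of $A_f$, but do require the small-shift trick for the open version. Verifying that $A_f$ is a continuity set — equivalently that $P(\bfeta \le f) = P(\bfeta < f)$ — is precisely where Lemma \ref{dfofEVPcontinuous} is essential and is the only genuinely non-formal input.
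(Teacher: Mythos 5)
Your closing ``alternative'' argument --- citing Lemma \ref{dfofEVPcontinuous}(ii) to say the sets $\set{g\in\barC:\,g\le f}$ are continuity sets of the law of $\bfeta$ and then applying the continuity-set form of the Portmanteau theorem --- is exactly the paper's proof, and it is correct; Lemma \ref{open_convergence} is not even needed there, since convergence of $P(\bfX_n\le f)$ is already condition \eqref{cond:_equivalent_definition_of_domain_of_attraction}.

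Your main line, however, contains one false step. The set $U_f=\set{g\in\barC:\,g(t)<f(t)\ \forall t}$ is in general \emph{not} open in the sup-norm when $f\in\barE$ has discontinuities, and the claim that pointwise strict inequality $g<f$ forces $\inf_{t\in[0,1]}(f(t)-g(t))>0$ ``precisely on that set'' is wrong: take $f(t_0)=0$, $f(t)=-1+\abs{t-t_0}$ for $t\neq t_0$, and $g\equiv -1$; then $g<f$ pointwise but $\inf(f-g)=0$, and arbitrarily small sup-norm perturbations of $g$ leave $U_f$. Hence the middle inequality $\liminf_n P(\bfX_n<f)\ge P(\bfeta<f)$ is not licensed by the open-set Portmanteau inequality as written. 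The repair is routine and stays within your scheme: replace $U_f$ by the genuinely open uniform-gap set $O_\eps:=\set{g\in\barC:\,\sup_{t\in[0,1]}(g(t)-f(t))<-\eps}$, observe $\set{g\le f-2\eps}\subset O_\eps\subset\set{g\le f}$, apply Portmanteau to $O_\eps$, and let $\eps\downarrow 0$ using the sup-norm continuity of $G$ from Lemma \ref{dfofEVPcontinuous}(i) --- or simply keep only your continuity-set argument, which coincides with the paper's.
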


\begin{proof}
The Portmanteau Theorem (see, e.g., Billingsley \cite{billi99}) characterizes weak convergence in particular in terms of convergence of the masses of all continuity sets. So Lemma \ref{dfofEVPcontinuous} immediately implies the assertion.
  \end{proof}

Examples of  continuous processes in $\bar C^-[0,1]$, whose properly normed maxima of iid copies converge weakly to an MSP and which obviously satisfy condition \eqref{cond:definition_of_domain_of_attraction}, are the \textit{GPD-processes} introduced by Buishand et al. \cite{buihz08}. We consider these generalized Pareto processes in Section \ref{sec:functional_GPD} below.

The following example shows, that convergence of a sequence of functional df of some continuous processes $\bfeta_n$ to the functional df of some standard MSP $\bfeta$ does in general not imply weak convergence in $C[0,1]$.

\begin{example}
Let $\bfeta$ be a standard MSP with generator $\bfZ=(Z_t)_{t\in[0,1]}$ satisfying $E\left(\min_{t\in[0,1]} Z_t\right)>0$; note that this is equivalent to $P\left(\min_{t\in[0,1]} Z_t>0\right)>0$.

Let $U$ be a uniformly on $(0,1)$ distributed rv, which is independent of $\bfeta$. Define for $u\in[0,1]$  the triangle shaped continuous function  $\Delta_n^u:[0,1]\to[0,1]$ by
\[
\Delta_n^u(t):=\begin{cases}
1,&\mbox{if }t=u,\\
0,&\mbox{if }t\not\in[u-2^{-n},u+2^{-n}],\\
\mbox{linearly interpolated elsewhere}.&
\end{cases}
\]

Set
\[
\bm{\eta}_n:=\bfeta-\Delta_n^U,\qquad n\in\N.
\]
Note that $\bm{\eta}_n\le\bfeta$. We get on the one hand

\begin{align*}
P(\bfeta\le f)&\le P(\bm{\eta}_n\le f)\\
&=P\left(\bfeta\le f+ \Delta_n^U\right)\\
&=\int_0^1P\left(\bfeta\le f+ \Delta_n^u\right)\,du\\
&=\int_0^1P\left(\eta(t)\le f(t)+\Delta_n^u(t),t\in[0,1]\right)\,du\\
&\le \int_0^1 P(\eta(t)\le f(t),t\not\in[u-2^{-n},u+2^{-n}])\,du\\
&=\int_0^1 \exp\left(-E\left(\sup_{t\not\in[u-2^{-n},u+2^{-n}]}(\abs{f(t)}Z_t)\right)\right)\,du\\
&\to_{n\to\infty} \int_0^1 \exp\left(-E\left(\sup_{t\in[0,1]}(\abs{f(t)}Z_t)\right)\right)\,du\\
&=P(\bfeta\le f)
\end{align*}
by the continuity of $\bfZ$, the continuity up to finitely many points of $f$ and the dominated convergence theorem.

On the other hand, $\bfeta_n$ does not converge weakly to $\bfeta$ in $C[0,1]$: If that would be the case, by the Portmanteau theorem,
\[
\liminf_{n\to\infty}P(\bm{\bfeta}_n\in \mathcal O)\ge P(\bfeta\in \mathcal O)
\]
should hold for every open subset $\mathcal O$ of $C[0,1]$ (with respect to the maximum distance $\norm{f-g}_\infty=\max_{t\in[0,1]}\abs{f(t)-g(t)}$).

Choose a constant $c<-1$. Then the set $\set{g\in C[0,1]:\,g>c}$ is an open subset of $C[0,1]$ and, hence, we should have
\begin{equation}\label{eqn:liminf_>_P(beta>c)}
\liminf_{n\to\infty}P(\bm{\bfeta}_n>c)\ge P(\bfeta>c).
\end{equation}

We know from Lemma \ref{lem:expansion_of_survivor_function_of_eta} that
\[
P(\bfeta>c)\ge 1-\exp\left(-\abs c E\left(\min_{t\in[0,1]}Z_t\right)\right),
\]
and we get
\begin{align*}
P(\bm{\eta}_n>c)&=P(\bfeta-\Delta_n^U>c)\\
&=\int_0^1 P(\bfeta>c+\Delta_n^u)\,du\\
&\le \int_0^1 P(\eta(u)>c+1)\,du\\
&=\int_0^1 1-\exp(c+1)\,du\\
&=1-\exp(c+1)\\
&<1-\exp\left(cE\left(\min_{t\in[0,1]}Z_t\right)\right)\\
&\le P(\bfeta>c),
\end{align*}
provided the constant $c$ satisfies in addition $c\left(1-E\left(\min_{t\in[0,1]}Z_t\right)\right)>-1$. Note that $E\left(\min_{t\in[0,1]}Z_t\right)\le 1$ anyway. But this contradicts equation \eqref{eqn:liminf_>_P(beta>c)}.
\end{example}

By now, we have shown that functional domain of attraction is less restrictive than the domain of attraction in the sense of weak convergence. In turn, functional domain of attraction obviously implies  convergence of the fidis, and, moreover, hypoconvergence of the normed maximum-process to the standard MSP in the sense of Molchanov \cite{mol05} is implied, which can be seen as follows.

The following result is a reformulation of Proposition 3.15 in Molchanov \cite{mol05} for continuous processes and hypoconvergence; note that every continuous process is a so-called normal integrand in the sense of \cite[Definition 3.5]{mol05}.
 \begin{proposition}\label{prop:hypoconvergence}
 A sequence of continuous processes $(\bfzeta_n)_{n\in\N}$ in $C[0,1]$ weakly hypoconverges to  $\bfzeta$ in $C[0,1]$ if, and only if,
  \begin{equation*}
P\left( \sup_{x\in K_i}\bfzeta_n(x)<t_i, i=1,\ldots,m\right)\to_{ n\to\infty} P\left( \sup_{x\in K_i}\bfzeta(x)<t_i, i=1,\ldots,m\right)
\end{equation*}
for all $m\in\N, t_1,\ldots,t_m\in\R$ and $K_1,\ldots,K_m\subset[0,1]$ being finite unions of (closed) intervals satisfying the continuity condition
\begin{equation*}
P\left( \sup_{x\in \bar K_i}\bfzeta(x)< t_i\right) = P\left( \sup_{x\in K_i^\circ}\bfzeta(x)\leq t_i\right),\quad i=1,\ldots,m.
\end{equation*}
Here $\bar K$ and $K^\circ$ denotes the closure and the interior of an interval $K\subset[0,1]$ with respect to the standard topology on $\R$, respectively.
  \end{proposition}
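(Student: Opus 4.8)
The plan is to deduce Proposition \ref{prop:hypoconvergence} from Proposition 3.15 in Molchanov \cite{mol05} by carefully unwinding what weak hypoconvergence means for the special case of continuous processes. First I would recall the general framework: a continuous process $\bfzeta$ on $[0,1]$ is a normal integrand in the sense of \cite[Definition 3.5]{mol05}, because it is lower semicontinuous (indeed continuous) in its argument and jointly measurable; hence hypoconvergence of sample paths and its weak (distributional) analogue are well defined, and \cite[Proposition 3.15]{mol05} applies. The content of that proposition is that weak hypoconvergence $\bfzeta_n \to \bfzeta$ is characterized by convergence of the probabilities $P(\bfzeta_n \in \{f : \sup_{x\in K_i} f(x) < t_i,\ i=1,\dots,m\})$ to the corresponding probabilities for $\bfzeta$, for a suitable class of ``test'' sets indexed by compact sets $K_i$ and levels $t_i$, subject to a continuity (boundary-negligibility) condition.

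The main step is therefore a translation between Molchanov's abstract continuity condition and the concrete one stated in the proposition. I would argue as follows. Since $[0,1]$ is compact and the $K_i$ are finite unions of closed intervals, each $K_i$ is itself compact, so $\sup_{x\in K_i}\bfzeta(x)$ and $\sup_{x\in K_i}\bfzeta_n(x)$ are genuine (attained) maxima of continuous functions and hence are real-valued random variables; this is what makes the events in the statement measurable and lets us replace sup by max where convenient. The continuity condition in \cite{mol05} asks that the boundary of the relevant hypograph-type set be a null set for the limit law; for the sets $\{f:\sup_{x\in K_i}f(x)<t_i\}$ this boundary is controlled by the difference between using the closed set $\bar K_i=K_i$ and its interior $K_i^\circ$, and between the strict inequality ``$<t_i$'' and the non-strict ``$\le t_i$''. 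Making this precise gives exactly the displayed condition
\[
P\left(\sup_{x\in\bar K_i}\bfzeta(x)<t_i\right)=P\left(\sup_{x\in K_i^\circ}\bfzeta(x)\le t_i\right),\quad i=1,\dots,m.
\]
I would verify this by noting that, by continuity of the paths of $\bfzeta$, $\sup_{x\in\bar K_i}\bfzeta(x)$ and $\sup_{x\in K_i^\circ}\bfzeta(x)$ differ only on the finitely many endpoints of the constituent intervals, and that the event $\{\sup_{x\in\bar K_i}\bfzeta(x)=t_i\}$ having zero probability is precisely what bridges ``$<$'' on the closure and ``$\le$'' on the interior; conversely the displayed equality forces this boundary event to be null. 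This identifies Molchanov's class of continuity sets with the one in our statement.

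Finally I would assemble the equivalence: under the stated continuity condition for each $i$, \cite[Proposition 3.15]{mol05} gives that weak hypoconvergence is equivalent to
\[
P\left(\sup_{x\in K_i}\bfzeta_n(x)<t_i,\ i=1,\dots,m\right)\to P\left(\sup_{x\in K_i}\bfzeta(x)<t_i,\ i=1,\dots,m\right)
\]
for all such finite tuples $(m;t_1,\dots,t_m;K_1,\dots,K_m)$, which is the assertion. The one point requiring care — the main obstacle — is the bookkeeping when the $K_i$ are disjoint \emph{unions} of intervals rather than single intervals, and the reduction of the joint event over $i=1,\dots,m$ to the one-dimensional continuity conditions on each $K_i$ separately; here I would use that the collection of finite unions of intervals is stable under the operations needed, that a finite union $K_i=\bigcup_j I_{i,j}$ satisfies $\sup_{x\in K_i}\bfzeta(x)=\max_j\sup_{x\in I_{i,j}}\bfzeta(x)$, and that the joint null-boundary condition follows from the componentwise ones by a union bound. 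Since these are exactly the manipulations already licensed by \cite{mol05} for normal integrands, no further probabilistic input beyond path-continuity of $\bfzeta$ is needed, and the proof is essentially a transcription.
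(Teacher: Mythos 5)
Your proposal takes essentially the same route as the paper: the paper offers no separate argument but presents the proposition as a direct reformulation of Molchanov's Proposition 3.15, justified only by the remark that every continuous process is a normal integrand in the sense of \cite[Definition 3.5]{mol05}, which is exactly your starting point, and your translation of the continuity (boundary-negligibility) condition is the intended reading. One small imprecision: for a degenerate component $K_i=\{t\}$ the suprema over $\bar K_i$ and $K_i^\circ$ do differ (the latter is over the empty set), so the continuity condition is not purely a null-boundary statement there; this is precisely why the paper notes afterwards that such singleton sets are excluded by the continuity condition.
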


\bigskip
Let $\bfY\in\mathcal D(\bfeta)$ for some standard MSP $\bfeta$ in the sense of definition \eqref{cond:definition_of_domain_of_attraction}. For the sake of simplicity and without loss of generality let $K_i\subset[0,1]$ be disjoint intervals, $x_i\in (-\infty,0], i=1,\ldots,m$, and set for $t\in[0,1]$
\begin{equation*}
f(t):= \sum_{i=1}^m x_i\bfone_{K_i}(t).
\end{equation*}
Then $f\in\barE$ and \eqref{eq:open_convergence} reads, with $\bfX_n=\max_{1\le i\le
n}\left(\left(\bfY_i-b_n)/a_n\right)\right)$,
\begin{equation*}
\lim_{n\to\infty}P\left( \sup_{t\in K_i}\bfX_n(t) < x_i,i=1,\ldots, n\right) = P\left( \sup_{t\in K_i}\bfeta(t) < x_i,i=1,\ldots, n\right).
\end{equation*}

Thus, hypoconvergence of the normalized maximum process follows from Proposition \ref{prop:hypoconvergence}.

Note that hypoconvergence of the normalized maximum process does in general not imply convergence in the sense of \eqref{eq:open_convergence}, since the continuity condition in Proposition \ref{prop:hypoconvergence} excludes convergence for closed subsets $K\subset[0,1]$ of the form $K=\{t\}, t\in[0,1]$.

\subsection{Domain of Attraction for Copula Processes}\label{sec:domain_of_attraction_for_copula_processes}

Let $\bfY=(Y_t)_{t\in[0,1]}$ in $C[0,1]$ be a stochastic process with
identical continuous marginal df $F$. Set
\begin{equation}\label{eq:defcopulaprocess}
\bfU=(U_t)_{t\in[0,1]}:=(F(Y_t))_{t\in[0,1]},
\end{equation}
which is the \emph{copula process} corresponding to $\bfY$. Note that each onedimensional marginal distribution of $\bfU$ is
the uniform distribution on $[0,1]$.

Suppose that the copula process corresponding to $\bfY$ is in the
functional domain of attraction of a standard MSP $\bfeta$,
representable as in Proposition
\ref{prop:characterization_of_MSP}. Then we know from Aulbach et
al. \cite{aulbf11} that for $d\in\N$ the copula $C_d$
corresponding to the rv $(Y_{i/d})_{i=1}^d$  satisfies the
equation
\begin{equation}\label{eqn:expansion_of_copula}
C_d(\bfy)=1-\norm{\bfone-\bfy}_{D_d}+o\left(\norm{\bfone-\bfy}_{\infty}\right),
\end{equation}
as $\norm{\bfone-\bfy}_{\infty}\to\bfzero$, uniformly in
$\bfy\in[0,1]^d$, where the $D$-norm is given by
\[
\norm{\bfx}_{D_d} =E\left(\max_{1\le i\le
d}\left(\abs{x_i}Z_{i/d}\right)\right),\qquad \bfx\in\R^d.
\]
We are going to establish an analogous result for the functional domain of attraction.

Let $\bfeta$ be a standard MSP with functional df $G$, and let
$\bfY$ be an arbitrary stochastic process in $C[0,1]$. By taking
logarithms, we obtain the following equivalences with some norming
functions $a_n\in C^+[0,1]$, $b_n\in C[0,1]$, $n\in\N$:
\begin{align}
&\bfY\in \mathcal{D}(\bfeta) \mbox{ in the sense of condition \eqref{cond:definition_of_domain_of_attraction}}\nonumber\\
&\iff P\left(\frac{\bfY-b_n}{a_n}\le f\right)^n=\exp\left(-\norm f_D\right)+o(1),\; f\in\barE,\mbox{ as } n\to\infty,\nonumber\\
&\iff P\left(\frac{\bfY-b_n}{a_n}\le f\right) = 1 - \frac 1n
\norm f_D+o\left(\frac 1n\right),\; f\in\barE,\mbox{ as }
n\to\infty\nonumber.
\end{align}

Let $\bfU$ be a copula-process as defined in \eqref{eq:defcopulaprocess} and set $H_f(s):=P(\bfU-1\le
s\abs f),\;s\le 0,\;f\in\barE$. Note that $H_f(\cdot)$ defines a
univariate df on $(-\infty,0]$. The family $\mathcal
P:=\set{H_f:\,f\in\barE}$ of univariate df is the
\emph{spectral decomposition} of the df $H(f)=P(\bfU-1\le f)$,
$f\in\barE$ of $\bfU-1$. This extends the spectral decomposition of a multivariate df in Falk et al. \cite[Section 5.4]{fahure10}. Standard arguments yield the next result.
\begin{proposition}\label{prop:equivalences_to_FuDA}
The following equivalences hold:
\begin{eqnarray}
\lefteqn{\bfU\in \mathcal{D}(\bfeta) \mbox{ in the sense of condition \eqref{cond:definition_of_domain_of_attraction}}}\nonumber\\
\quad&\Leftrightarrow& P\left(\bfU-1\le \frac f n\right)=1-\norm{\frac f n}_D+o\left(\frac 1 n \right),\, f\in\barE,\mbox{as }n\to\infty, \nonumber\\
&\Leftrightarrow& H_f(s)=1+s\norm{f}_D+o(s), \,f\in\barE,\mbox{as }s\uparrow 0, \label{eqn:equivalent_formulation_of_domain_of_attraction}
\end{eqnarray}
\end{proposition}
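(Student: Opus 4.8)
The plan is to establish the two equivalences separately. Throughout I read the middle condition as the assertion that the \emph{canonical} norming functions $a_n\equiv 1/n\in C^+[0,1]$ and $b_n\equiv 1\in C[0,1]$ realise the functional domain of attraction of $\bfeta$, and I repeatedly use the positive homogeneity $\norm{f/n}_D=\tfrac1n\norm f_D$ of the $D$-norm together with the fact that every $f\in\barE$ satisfies $f\le 0$.

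\textbf{First equivalence.} One direction is immediate: if $P(\bfU-1\le f/n)=1-\norm{f/n}_D+o(1/n)$ for every $f\in\barE$, then for $a_n\equiv 1/n$, $b_n\equiv 1$ we have $P\big((\bfU-b_n)/a_n\le f\big)=P(\bfU\le 1+f/n)=P(\bfU-1\le f/n)=1-\tfrac1n\norm f_D+o(1/n)$; raising to the $n$-th power gives $P\big((\bfU-b_n)/a_n\le f\big)^n\to\exp(-\norm f_D)=P(\bfeta\le f)$, which is exactly \eqref{cond:definition_of_domain_of_attraction}, so $\bfU\in\mathcal D(\bfeta)$. For the converse, suppose $\bfU\in\mathcal D(\bfeta)$ with some admissible norming $a_n\in C^+[0,1]$, $b_n\in C[0,1]$; by the chain of equivalences stated just before the proposition this means $P\big((\bfU-b_n)/a_n\le f\big)=1-\tfrac1n\norm f_D+o(1/n)$ for all $f\in\barE$. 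I would then show these norming functions may be replaced by the canonical ones. Since every one-dimensional margin of $\bfU$ is uniform on $(0,1)$, while $\bfeta$ has standard negative exponential margins and $(1+x/n)^n\to e^x$, the univariate convergence of types theorem applied at each fixed $t$ yields $n\,a_n(t)\to 1$ and $n(b_n(t)-1)\to 0$; evaluating the domain of attraction condition at $f\equiv 0$ and at the constant functions $f\equiv x\bfone$ ($x<0$) upgrades these to uniform limits in $t\in[0,1]$. Consequently the functions $g_n:=(1-b_n)/a_n+f/(n a_n)$, which satisfy $\{\bfU-1\le f/n\}=\{(\bfU-b_n)/a_n\le g_n\}$, converge to $f$ in $\norm\cdot_\infty$. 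Sandwiching $g_n$ between $f-\eps\bfone$ and $f+\eps\bfone$ (first for $f$ with $\sup f<0$, then extending to all of $\barE$ via the sup-norm continuity of $G$ from Lemma~\ref{dfofEVPcontinuous} and monotone approximation) and using $\norm{f\pm\eps\bfone}_D\to\norm f_D$ as $\eps\downarrow 0$ then gives $P(\bfU-1\le f/n)=1-\tfrac1n\norm f_D+o(1/n)=1-\norm{f/n}_D+o(1/n)$. I expect this reduction of the norming — keeping track of the exact $o(1/n)$ rate while changing $a_n,b_n$ — to be the only genuinely delicate point; everything else here is routine.

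\textbf{Second equivalence.} First I record that for $f\in\barE$ we have $f/n=-\abs f/n=s\abs f$ with $s=-1/n$, so that $P(\bfU-1\le f/n)=H_f(-1/n)$ and $\norm{f/n}_D=\tfrac1n\norm f_D=-s\norm f_D$; hence the middle condition is precisely $H_f(-1/n)=1+s\norm f_D+o(s)$ along the sequence $s=-1/n\uparrow 0$, and the implication ``third $\Rightarrow$ middle'' is trivial. For ``middle $\Rightarrow$ third'' I would exploit that $s\mapsto H_f(s)=P(\bfU-1\le s\abs f)$ is nondecreasing on $(-\infty,0]$. For $s\in(-1/n,-1/(n+1)]$ monotonicity gives $H_f(-1/n)\le H_f(s)\le H_f(-1/(n+1))$, whence $\tfrac{\norm f_D}{n+1}+o(1/n)\le 1-H_f(s)\le\tfrac{\norm f_D}{n}+o(1/n)$ while $\abs s\in[1/(n+1),1/n)$. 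Dividing through by $\abs s$ and letting $n=n(s)\to\infty$ as $s\uparrow 0$ squeezes $(1-H_f(s))/\abs s\to\norm f_D$, that is $H_f(s)=1+s\norm f_D+o(s)$ as $s\uparrow 0$ (the case $f=0$, where $\norm f_D=0$ and $H_f\equiv 1$, being trivial). This sequential-to-continuous passage is the standard argument already used for the spectral decomposition in the multivariate theory, so I anticipate no obstacle here.
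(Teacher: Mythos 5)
Your handling of the second equivalence (the passage from $s=-1/n$ to continuous $s\uparrow 0$ via monotonicity of $H_f$) is correct, and so is the easy direction of the first equivalence, where the canonical norming $a_n\equiv 1/n$, $b_n\equiv 1$ is simply exhibited. The gap is exactly at the point you yourself flag as delicate: the reduction from an arbitrary admissible norming to the canonical one. From the marginal convergence you can indeed get $na_n(t)\to 1$ and $n(b_n(t)-1)\to 0$ \emph{pointwise} (via convergence of types, using $f=x\bfone_{\set{t}}$ and $f\equiv 0$), but the claimed upgrade to \emph{uniform} convergence by ``evaluating the domain of attraction condition at $f\equiv 0$ and at constants'' is not justified, and it is in fact false. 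Take $\bfU=\exp(\bfeta)$, $a_n=1/n$ and $b_n=1+\delta_n$, where $\delta_n\ge 0$ is a continuous bump of height $n^{-1/2}$ supported on $I_n=[\tfrac12-\tfrac1n,\ \tfrac12-\tfrac1n+\tfrac1{n^2}]$. Since every $t\in[0,1]$ lies in only finitely many $I_n$, one checks for every $f\in\barE$ that $n\,\norm{\bigl(\abs f/n-\delta_n\bigr)^+\,}_D=E\bigl(\sup_t(\abs{f(t)}-n\delta_n(t))^+Z_t\bigr)\to\norm f_D$ by dominated convergence, so condition \eqref{cond:definition_of_domain_of_attraction} holds with this norming; yet $\sup_t n\abs{b_n(t)-1}=\sqrt n\to\infty$. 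Consequently your functions $g_n=(1-b_n)/a_n+f/(na_n)$ need not converge to $f$ in $\norm\cdot_\infty$ (here $g_n=f-n\delta_n$), and the sandwich between $f\pm\eps\bfone$ collapses, so the implication ``$\bfU\in\mathcal D(\bfeta)$ with some norming $\Rightarrow$ expansion with the canonical norming'' is not proved by your argument.

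To repair this you need a genuinely different treatment of the norming replacement: pointwise convergence of $na_n$, $n(b_n-1)$ transfers the fidis but not directly the functional events $\set{\cdot\le f}$, which is precisely the nontrivial content here. The paper itself sidesteps the issue (``standard arguments''), implicitly working with the canonical norming forced by the uniform margins of a copula process — compare the remark in the following subsection that ``the univariate margins determine the norming constants, so the norming functions can be chosen as $a_n=1/n$, $b_n=1$''; under that reading the first equivalence is immediate from the chain of equivalences displayed before the proposition, and the only substantive step is the interpolation argument you carried out correctly. As written, though, your proof of the ``only if'' direction rests on a false intermediate claim and has a genuine gap.
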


\begin{remark}\upshape Characterization \eqref{eqn:equivalent_formulation_of_domain_of_attraction} entails in particular
 that $H_f(s)$ is  differentiable from the left in $s=0$ with derivative $h_f(0):=\frac{d}{ds}H_f(s)|_{s=0}=\norm f_D,\; f\in\barE$.
\end{remark}

\begin{remark}\upshape A sufficient condition for $\bfU\in \mathcal{D}(\bfeta)$ is given by
\begin{equation}\label{eqn:sharpening_of_equivalent_formulation_of_domain_of_attraction}
P(\bfU-1\le g)=1-\norm g_D+o(\norm g_\infty)\tag{\ref{eqn:equivalent_formulation_of_domain_of_attraction}'}
\end{equation}
as $\norm g_\infty\to 0$, uniformly for all $g\in\barE$ with
$\norm g_\infty\le 1$, i.e., for all $g$ in the unit ball of
$\barE$.
\end{remark}

\begin{example} Take $\bfU=\exp(\bfeta)$. Then $\bfU$ is a copula
process, and we obtain uniformly for $g\in\barE$ with $\norm
g_\infty\le 1-\varepsilon$ by using the approximation
$\log(1+x)=x+O\left(x^2\right)$ as $x\to 0$
\begin{align}\label{eqn:example_of_copula_process_satisfying_domain_of_attraction_condition}
P(\bfU-1\le g)&=P(\bfeta\le \log(1+g))\nonumber\\
&=\exp\left(-E\left(\sup_{t\in[0,1]}\left(\abs{\log(1+g(t))}Z_t\right)\right)\right)\nonumber\\
&=\exp\left(-E\left(\sup_{t\in[0,1]}\left(\abs{g(t)+O\left(g(t)^2\right)}
Z_t\right)\right)\right)\nonumber\\
&=\exp\left(-E\left(\sup_{t\in[0,1]}\left(\abs{g(t)} Z_t\right)\right)+O\left(\norm g_\infty^2\right)\right)\nonumber\\
&=1-\norm g_D+ O\left(\norm g_\infty^2\right),
\end{align}
 i.e., the copula process $\bfU=\exp(\bfeta)$ satisfies condition \eqref{eqn:sharpening_of_equivalent_formulation_of_domain_of_attraction}.
\end{example}

\subsection{Functional Domain of Attraction for arbitrary MSP}

We conclude from de Haan and Lin \cite{dehal01} that the
process $\bfY$ is in the domain of attraction (in the sense of weak convergence of probability measures on $C[0,1]$) of an MSP if, and
only if each $Y_t$ is in the domain of attraction of a univariate
extreme value distribution together with the condition that the
copula process converges in distribution to a standard MSP
$\bfeta$, that is
\[
\left(\max_{1\le i\le n}n(U^{(i)}_t-1)\right)_{t\in[0,1]}\to_D
\bm{\eta}
\]
in $C[0,1]$, where $\bfU^{(i)}$, $i\in\N$, are independent copies
of $\bfU$.  Note that the univariate margins determine the norming
constants, so  the norming functions can be chosen as the constant
functions $a_n=1/n$, $b_n=1$, $n\in\N$.

Following this idea, we give the defintion of functional domain of attraction of an arbitrary MSP in $C[0,1]$.

Note that for an arbitrary process $\bfY\in C[0,1]$ whose one-dimensional margins $Y_t$ have continuous distribution functions $F_t$ for each $t\in[0,1]$, we alway get a corresponding \emph{continuous} copula process $\bfU= \left(U_t\right)_{t\in[0,1]}:=\left(F_t(Y_t)\right)_{t\in[0,1]}$.

\begin{definition} Let $\bfY$ in $C[0,1]$ be a process with continuous one-dimensional margins and let  $\bfzeta$ be in $C[0,1]$ an MSP with arbitrary max-stable margins.
We say that $\bfY$ is in the \textit{functional domain of attraction} of $\bfzeta$, denoted by $\bfY\in \mathcal D(\bfzeta)$, if

\begin{enumerate}
	\item All one-dimensional margins of $\bfY$ are in the domain of attraction of the corresponding one-dimensional margin of $\bfzeta$, i.e. $Y_t\in \mathcal D(\zeta_t),\ t\in[0,1]$.
	\item $\bfU:=\left(F_t(Y_t)\right)_{t\in[0,1]} \in \mathcal D(\bfeta)$ for some standard max-stable process $\bfeta$.
\end{enumerate}

\end{definition}

\section{Functional GPD}\label{sec:functional_GPD}
\subsection{Generalized Pareto Processes}
A univariate GPD $W$ is simply given by $W(x)=1+\log(G(x))$, $G(x)\ge 1/e$, where $G$ is a univariate EVD. It was established by Pickands \cite{pick75} and Balkema and de Haan \cite{balh74} that the maximum of $n$ iid univariate observations, linearly standardized, converges in distribution to an EVD as $n$ increases if, and only if, the exceedances above an increasing threshold follow a generalized Pareto distribution (GPD). The multivariate analogon is due to Rootz\'{e}n and Tajvidi \cite{roott06}. It was observed by Buishand et al. \cite{buihz08} that a $d$-dimensional GPD $W$ with ultimately standard Pareto margins can be represented in its upper tail  as $W(\bfx)=P(U^{-1}\bfZ\le\bfx)$, $\bfx_0\le \bfx\le\bfzero\in\R^d$, where the rv $U$ is uniformly on $(0,1)$ distributed and independent of the rv $\bfZ=(Z_1,\dots,Z_d)$ with $0\le Z_i\le c$ for some $c\ge 1$ and $E(Z_i)=1$, $1\le i\le d$. We extend this approach to functional spaces. For a recent account of multivariate EVT and GPD we refer to Falk et al. \cite{fahure10}.

\begin{definition}
Let $U$ be a uniformly on $[0,1]$ distributed rv, which is independent of a generator process $\bfZ\in\bar C^+[0,1]$ with properties \eqref{eqn:properties_of_generator}. Then the stochastic process
\[
\bfY:=\frac 1U \bfZ\ \in\bar C^+[0,1].
\]
is called a \textit{GPD-process} (cf. Buishand et al.  \cite{buihz08}).
\end{definition}

The onedimensional margins $Y_t$ of $\bfY$ have ultimately standard Pareto tails:
\begin{align*}
P(Y_t\le x)&= P\left(\frac 1 x Z_t\le U\right)\\
&=\int_0^m P\left(\frac 1x z\le U\right)\,(P*Z_t)(dz)\\
&=1 - \frac 1 x \int_0^m z\,(P*Z_t)(dz)\\
&=1 - \frac 1 x E(Z_t)\\
&= 1- \frac 1 x,\qquad x\ge m,\,0\le t\le 1.
\end{align*}

Put $\bfV:=-1/\bfY$. Then, by Fubini's Theorem,
\begin{align*}
P\left(\bfV\le f\right)&=P\left(\sup_{t\in[0,1]}\left(\abs{f(t)}Z_t\right)\le U\right)\\
&=1-\int_0^1  P\left(\sup_{t\in[0,1]}\left(\abs{f(t)}Z_t\right)>u\right)\,du\\
&=1- E\left(\sup_{t\in[0,1]}\left(\abs{f(t)}Z_t\right)\right)\\
&=1-\norm f_D
\end{align*}
for all $f\in\barE$ with $\norm f_\infty\le 1/m$, i.e.,  $\bfV$ has the property that its distribution function is in its upper tail equal to
\begin{align}
W(f)&:=P\left(\bfV\le f\right)\notag\\
&=1-\norm f_D\notag\\
&=1+\log\left(\exp\left(-\norm f_D\right)\right)\notag\\
&=1+\log(G(f)),\qquad f\in \barE,\,\norm f_\infty\le 1/m \label{uppertail_funct_df_of_GPD},
\end{align}
where $G(f)=P(\bfeta\le f)$ is the functional df of the MSP $\bfeta$ with $D$-norm $\norm\cdot_D$ and generator $\bfZ$.

The preceding representation of the upper tail of a functional GPD in terms of $1+\log(G)$ is in complete accordance with the unit- and multivariate case (see, for example, Falk et al. \cite[Chapter 5]{fahure10}). We write $W=1+\log(G)$ in short notation and call $\bfV$ a GPD-process as well.

\begin{remark}Due to representation \eqref{uppertail_funct_df_of_GPD}, the GPD process $\bfV$ is clearly in the functional domain of attraction, in the sense of equation \eqref{cond:definition_of_domain_of_attraction}, of the standard MSP $\bfeta$ with $D$-norm $\norm\cdot_D$ and generator $\bfZ$; take $a_n=1/n$ and $b_n=0$.
\end{remark}

\begin{remark}
As mentioned by Buishand et. al \cite{buihz08}, the GPD-process $\bfY$ is in the domain of attraction of a \textit{simple} max-stable process $\bfxi$ in the sense of weak convergence on $C[0,1]$: for $\bfY_1,\bfY_2,\ldots$ independent copies of $\bfY$ we have
$$
\frac1n \max_{1\leq i\leq n} \bfY_i\ \to_D  \bfxi \qquad \textrm{in }C[0,1].
$$
\end{remark}

The following result is a functional version of the well-known fact  that the spectral df of a GPD random vector is equal to a uniform df in a neighborhood of 0.

\begin{lemma}
We have for $f\in\barE$  with $\norm f_\infty\le m$ and some $s_0<0$
\[
W_f(s):=P\left(\bfV\le s\abs f\right)=1+s\norm f_D,\qquad s_0\le s\le 0.
\]
\end{lemma}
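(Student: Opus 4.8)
The plan is to reduce the statement to the computation carried out immediately before \eqref{uppertail_funct_df_of_GPD}, where it was shown that $P(\bfV\le g)=1-\norm g_D$ for every $g\in\barE$ satisfying $\norm g_\infty\le 1/m$. The whole proof then consists of applying this identity with $g=s\abs f$ and of determining how negative $s$ may be so that the admissibility condition $\norm g_\infty\le 1/m$ still holds.

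First I would observe that for $s\le 0$ the function $s\abs f$ is nonpositive and hence belongs to $\barE$, and that $\norm{s\abs f}_\infty=\abs s\,\norm f_\infty\le \abs s\,m$ by the hypothesis $\norm f_\infty\le m$. Consequently, choosing $s_0:=-1/m^2$, we obtain $\norm{s\abs f}_\infty\le 1/m$ for every $s$ with $s_0\le s\le 0$, so the representation recalled above applies and yields $W_f(s)=P(\bfV\le s\abs f)=1-\norm{s\abs f}_D$. It then remains to simplify the right-hand side: since the $D$-norm depends on a function only through its modulus and is positively homogeneous, $\norm{s\abs f}_D=\abs s\,\norm f_D=-s\,\norm f_D$, which gives $W_f(s)=1+s\,\norm f_D$ for $s_0\le s\le 0$.

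I do not expect any genuine obstacle. The only point that needs a moment's care is the bookkeeping for $s_0$: one must feed the earlier GPD identity with its normalization constraint $\norm g_\infty\le 1/m$ and combine it with the weaker bound $\norm f_\infty\le m$ allowed in the present statement, and this is exactly what produces the quadratic threshold $s_0=-1/m^2$ (a sharper, $f$-dependent choice $s_0=-1/(m\norm f_\infty)$ is of course also available when $f\not\equiv 0$). Nonpositivity of $s\abs f$ and the homogeneity and absolute-value invariance of $\norm\cdot_D$ are completely routine.
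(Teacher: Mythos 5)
Your proposal is correct and is exactly the argument the paper intends: the lemma is an immediate consequence of the identity $P(\bfV\le g)=1-\norm g_D$ for $\norm g_\infty\le 1/m$ established just before \eqref{uppertail_funct_df_of_GPD}, applied to $g=s\abs f$, together with the homogeneity of the $D$-norm. Your bookkeeping with $s_0=-1/m^2$ (or the $f$-dependent sharper choice) correctly reconciles the weaker hypothesis $\norm f_\infty\le m$ with the normalization constraint of that identity.
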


Let $\bfU$ be a copula process. Then the following variant of Proposition \ref{prop:equivalences_to_FuDA} holds.

\begin{proposition} The property $\bfU\in \mathcal{D}(\bfeta)$ in the sense of condition \eqref{cond:definition_of_domain_of_attraction} is equivalent to
\begin{equation}\label{eqn:tail_equivalence_of_H_and_W}
\lim_{s\uparrow 0} \frac{1-H_f(s)}{1-W_f(s)}=1, \qquad f\in\barE,
\end{equation}
i.e., the spectral df $H_f(s)=P(\bfU-1\le s\abs f),\;s\le 0,$ of $\bfU-1$ is \textit{tail equivalent} with that of the GPD $W_f=1+\log(G_f)$, $G(\cdot)=\exp\left(-\norm{\cdot}_D\right)$.
\end{proposition}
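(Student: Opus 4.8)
The plan is to translate the equivalence \eqref{eqn:tail_equivalence_of_H_and_W} into a statement about the left derivative of $H_f$ at $0$ and then invoke Proposition \ref{prop:equivalences_to_FuDA}. First I would recall from the Lemma immediately preceding this Proposition that $W_f(s) = 1 + s\norm f_D$ for $s_0\le s\le 0$ (after replacing $f$ by $f/\norm f_\infty$ or otherwise rescaling so that $\norm f_\infty\le m$, which changes only the constant $s_0$ and not the validity of the tail statement); hence $1 - W_f(s) = -s\norm f_D = \abs s\,\norm f_D$ for all $s$ in a left neighborhood of $0$. If $\norm f_D = 0$, i.e. $f\equiv 0$, both sides are trivially handled, so assume $f\not\equiv 0$ and $\norm f_D>0$. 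Then for $s<0$ close to $0$,
\[
\frac{1-H_f(s)}{1-W_f(s)} = \frac{1-H_f(s)}{\abs s\,\norm f_D} = \frac{1}{\norm f_D}\cdot\frac{1-H_f(s)}{-s}.
\]
Therefore \eqref{eqn:tail_equivalence_of_H_and_W} holds for a given $f$ if and only if $\lim_{s\uparrow 0}\frac{1-H_f(s)}{-s} = \norm f_D$, i.e. if and only if $H_f(s) = 1 + s\norm f_D + o(s)$ as $s\uparrow 0$.

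Next I would simply quote Proposition \ref{prop:equivalences_to_FuDA}: the condition $\bfU\in\mathcal D(\bfeta)$ in the sense of \eqref{cond:definition_of_domain_of_attraction} is equivalent to $H_f(s) = 1 + s\norm f_D + o(s)$ as $s\uparrow 0$, holding for all $f\in\barE$. Combining the two equivalences gives exactly the claim. One should be a little careful about the quantifier: \eqref{eqn:equivalent_formulation_of_domain_of_attraction} is ``for all $f\in\barE$'', and the pointwise-in-$f$ reduction above shows \eqref{eqn:tail_equivalence_of_H_and_W} holds for all $f\in\barE$ iff the little-$o$ expansion holds for all $f\in\barE$, so the two global statements match up. The rescaling point — that the Lemma on $W_f$ is stated under $\norm f_\infty\le m$ while \eqref{eqn:tail_equivalence_of_H_and_W} is quantified over all of $\barE$ — is handled by noting that both $s\mapsto H_f(s)$ and $s\mapsto W_f(s)$ depend on $f$ only through $\abs f$, and that the tail behaviour as $s\uparrow 0$ of $H_{cf}$ and $W_{cf}$ for a positive constant $c$ is that of $H_f$, $W_f$ with $s$ replaced by $cs$; since $\norm{cf}_D = c\norm f_D$, the ratio in \eqref{eqn:tail_equivalence_of_H_and_W} is unchanged under $f\mapsto cf$, so it suffices to verify it for $f$ with $\norm f_\infty\le m$, where the explicit formula for $W_f$ applies.

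The main (and really the only) obstacle is the bookkeeping around the normalization of $f$ and making sure the ``$W_f = 1 + s\norm f_D$ on $[s_0,0]$'' identity is available in the form needed; the analytic content is entirely contained in Proposition \ref{prop:equivalences_to_FuDA} and the preceding Lemma, and what remains is the elementary observation that for a function of the form $1 + s\norm f_D$ the statements ``tail equivalent to $H_f$'' and ``$H_f$ has left derivative $\norm f_D$ at $0$'' coincide. I would write the proof in three short steps: (1) reduce to $\norm f_\infty\le m$ by scaling and insert $1-W_f(s) = \abs s\norm f_D$; (2) rewrite the ratio and identify \eqref{eqn:tail_equivalence_of_H_and_W} with $H_f(s) = 1 + s\norm f_D + o(s)$; (3) apply Proposition \ref{prop:equivalences_to_FuDA}.
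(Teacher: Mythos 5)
Your proposal is correct and follows exactly the route the paper intends: the paper states this result without proof as an immediate ``variant'' of Proposition \ref{prop:equivalences_to_FuDA}, the point being precisely that $1-W_f(s)=\abs s\,\norm f_D$ for $s$ near $0$ (by the preceding Lemma, which applies for any fixed $f\in\barE$ once $\abs s\,\norm f_\infty$ is small, so your rescaling remark only confirms what is already available), so that the tail-equivalence condition is the same as the expansion $H_f(s)=1+s\norm f_D+o(s)$, $s\uparrow 0$. Your handling of the degenerate case $f\equiv 0$ (where the ratio is $0/0$ and must be excluded or interpreted conventionally) is the only point the paper glosses over as well, so there is nothing of substance to add.
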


We finish this section by defining a standard generalized Pareto process we are working with in the sequel.
\begin{definition}
A stochastic process $\bfV\in \barC$ is a \textit{standard generalized Pareto process} (GPP), if there exists a $D$-norm $\norm\cdot_D$ on $E[0,1]$ and some $c_0>0$ such that
 \[
 P(\bfV\le f)=1-\norm f_D
 \]
 for all $f\in\barE$ with $\norm f_\infty\le c_0$.
\end{definition}

\subsection{Spectral $\delta$-Neighborhood of a Standard GPP} \label{sec:Spectral_delta_Neighborhood_of_a_standard_GPP}

 Using the spectral decomposition of a stochastic process in $\barC$, we can easily extend the definition of a spectral $\delta$-neighborhood of a multivariate GPD as in Falk et al. \cite[Section 5.5]{fahure10} to the spectral $\delta$-neighborhood of a standard GPP.
 Denote by $\bar E^-_1[0,1]=\set{f\in\barE:\,\norm f_\infty=1}$ the unit sphere in $\barE$.

 \begin{definition}
 We say that a stochastic process $\bfY\in\barC$ belongs to the \textit{spectral $\delta$-neighborhood} of the GPP $\bfV$ for some $\delta \in(0,1]$, if we have uniformly for $f\in \bar E^-_1[0,1]$ the expansion
 \begin{align*}
 1-P(\bfY\le c f)&=(1-P(\bfV\le cf)\left(1+O\left(c^\delta\right)\right)\\
 &=\norm f_D \left(1+O\left(c^\delta\right)\right)
 \end{align*}
 as $c\downarrow 0$.
 \end{definition}
 A standard MSP is, for example, in the spectral $\delta$-neighborhood of the corresponding $GPP$ with $\delta=1$, see expansion \eqref{eqn:example_of_copula_process_satisfying_domain_of_attraction_condition}.

 The following result on the rate of convergence extends Theorem 5.5.5 in Falk et al. \cite{fahure10} on the rate of convergence of multivariate extremes to functional extremes.

 \begin{proposition}
 Let $\bfY$ be a stochastic process in $\barC$, $\bfV$ a standard GPP with $D$-norm $\norm\cdot_D$ and $\bfeta$ a corresponding standard MSP.
 \begin{itemize}
 \item[\textit{(i)}] Suppose that $\bfY$ is in the spectral $\delta$-neighborhood of $\bfV$ for some $\delta\in(0,1]$. Then we have
     \[
     \sup_{f\in\barE}\abs{P\left(\bfY\le \frac f n\right)^n-P(\bfeta\le f)}=O\left(n^{-\delta}\right).
     \]
\item[\textit{(ii)}] Suppose that $H_f(c)=P(\bfY\le c\abs f)$ is differentiable with respect to $c$ in a left neighborhood of $0$ for any $f\in\bar E^-_1[0,1]$, i.e., $h_f(c):=(\partial/\partial c)H_f(c)$ exists for $c\in(-\varepsilon,0)$ and any $f\in\bar E^-_1[0,1]$. Suppose, moreover, that $H_f$ satisfies the von Mises condition
    \[
    \frac{-ch_f(c)}{1-H_f(c)}=:1+r_f(c)\to_{c\uparrow 0} 1,\qquad f\in \bar E^-_1[0,1],
    \]
    with remainder term $r_f$ satisfying
    \[
    \sup_{f\in \bar E^-_1[0,1]}\abs{\int_c^0\frac{r_f(t)} t\,dt}\to_{c\uparrow 0} 0.
    \]
    If
    \[
    \sup_{f\in\barE}\abs{P\left(\bfY\le \frac f n\right)^n-P(\bfeta\le f)}=O\left(n^{-\delta}\right)
    \]
    for some $\delta\in(0,1]$, then $\bfY$ is in the spectral $\delta$-neighborhood of the GPP $\bfV$.
 \end{itemize}
 \end{proposition}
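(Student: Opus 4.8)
The plan is to mimic, in the functional setting, the classical argument for the rate of convergence of multivariate extremes (Theorem 5.5.5 in Falk et al.), working uniformly over the unit sphere $\bar E^-_1[0,1]$ via the spectral decomposition. The key observation throughout is that for $f\in\bar E^-_1[0,1]$ and $c\downarrow 0$ we have $P(\bfY\le cf)=H_f(c)$ and $P(\bfV\le cf)=1-c\norm f_D$ (for $c$ small, by the definition of a standard GPP), while $P(\bfeta\le f)=\exp(-\norm f_D)$; and an arbitrary $g\in\barE$ can be written as $g=cf$ with $c=\norm g_\infty$ and $f\in\bar E^-_1[0,1]$, so the supremum over $\barE$ reduces to a supremum over $\bar E^-_1[0,1]$ of a function of the scalar $c=\norm g_\infty$. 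Note also $\norm f_D\in[1,m]$ is bounded, which makes all the elementary one-variable estimates uniform in $f$.

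\textbf{Part (i).} Starting from the spectral $\delta$-neighborhood assumption, I would write, uniformly for $f\in\bar E^-_1[0,1]$,
\[
P\!\left(\bfY\le \frac{cf}{1}\right)=1-\norm f_D\, c\,(1+O(c^\delta))
\]
and then set $c=1/n$ and raise to the $n$th power. Using $\log(1-x)=-x+O(x^2)$ and collecting terms, $n\log P(\bfY\le f/n)=-\norm f_D(1+O(n^{-\delta}))+O(n^{-1})=-\norm f_D+O(n^{-\delta})$, the $O$-terms uniform in $f$ because $\norm f_D\le m$; exponentiating and comparing with $P(\bfeta\le f)=\exp(-\norm f_D)$, again using $|e^{-a}-e^{-b}|\le|a-b|$, gives the bound $O(n^{-\delta})$ uniformly for $f\in\bar E^-_1[0,1]$. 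Finally, for general $g\in\barE$ put $c=\norm g_\infty$, $f=g/c\in\bar E^-_1[0,1]$ (the case $g\equiv 0$ being trivial); if $c\le 1$ one has $f/ (1/n)\mapsto$ the previous estimate applied with the rescaled argument, and if the argument $cf$ corresponds to $c$ not of the form $1/n$ one absorbs this into the remainder since $P(\bfY\le g/n)^n$ is monotone in $g$ and sandwiched between the values at nearby sphere points — I would make this precise by a monotonicity/sandwich step, which is routine.

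\textbf{Part (ii).} This is the converse and the harder direction. Here I would integrate the von Mises condition: from $-ch_f(c)/(1-H_f(c))=1+r_f(c)$ one gets, for $c$ in a left neighborhood of $0$,
\[
1-H_f(c)=\big(1-H_f(c_0)\big)\exp\!\left(-\int_c^{c_0}\frac{1+r_f(t)}{-t}\,dt\right),
\]
and after identifying the leading behaviour $1-H_f(c)\sim \norm f_D\,|c|$ as $c\uparrow 0$ (which follows from the convergence $P(\bfY\le f/n)^n\to\exp(-\norm f_D)$ hypothesized, via a subsequence/continuity argument), the representation becomes $1-H_f(c)=\norm f_D\,|c|\,\exp\!\big(\int_c^0 r_f(t)/t\,dt\big)$. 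The uniform smallness hypothesis on $\int_c^0 r_f(t)/t\,dt$ then yields $1-H_f(c)=\norm f_D|c|(1+\rho_f(c))$ with $\sup_f|\rho_f(c)|\to 0$; the point is to upgrade this ``$o(1)$'' remainder to the quantitative rate $O(c^\delta)$ using the hypothesis $\sup_{f\in\barE}|P(\bfY\le f/n)^n-P(\bfeta\le f)|=O(n^{-\delta})$. This is exactly where the main obstacle lies: one must translate a bound on $|P(\bfY\le f/n)^n-e^{-\norm f_D}|$ (a statement about the $n$th power, i.e. about $1-H_f(1/n)$ only along the sequence $c=1/n$) into a bound on $r_f$ or on $\rho_f(c)$ for \emph{all} small $c$. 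I expect to handle this the same way as in the multivariate proof: differentiate/compare the $n$th-power relation to get $\log P(\bfY\le f/n)^n = -\norm f_D + O(n^{-\delta})$, hence $1-H_f(1/n) = \frac{\norm f_D}{n}(1+O(n^{-\delta}))$, then use the von Mises structure (monotonicity of $1-H_f$ and the integral representation, which forces $\rho_f$ to vary slowly) to interpolate between consecutive values $1/n$ and thereby conclude $1-H_f(c)=\norm f_D|c|(1+O(c^\delta))$ uniformly, which is the definition of the spectral $\delta$-neighborhood. The slow-variation/interpolation step and the uniformity bookkeeping over $\bar E^-_1[0,1]$ are the delicate parts; everything else is a direct functional transcription of the finite-dimensional argument.
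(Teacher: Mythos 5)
Your proposal is correct and takes essentially the same route as the paper: both reduce the supremum over $\barE$ via the spectral decomposition $g=\norm g_\infty\,f$ with $f\in\bar E^-_1[0,1]$ to uniform one-dimensional estimates for the spectral dfs $H_f(c)$, the uniformity coming from $1\le\norm f_D\le m$. At that point the paper simply cites the bivariate argument of Falk and Reiss (2002), whereas you sketch those same steps (Taylor expansion for (i); integrating the von Mises condition, identifying the limit $\norm f_D$, and interpolating from the grid $c=-1/n$ for (ii)) yourself -- the details you defer, such as the regime $\norm g_\infty$ of order $n$ in part (i), are exactly what the cited argument supplies and are indeed routine.
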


 \begin{proof}
 Note that
 \begin{align*}
 &\sup_{f\in\barE}\abs{P\left(\bfY\le \frac f n\right)^n-P(\bfeta\le f)}\\
 &= \sup_{f\in\barE}\abs{P\left(\bfY\le \frac{\norm f_\infty} n \frac f{\norm f_\infty}\right)^n- P\left(\bfeta\le \frac{\norm f_\infty} n \frac f{\norm f_\infty}\right)}\\
 &=\sup_{c < 0}\sup_{f\in\bar E^-_1[0,1]}\abs{P\left(\bfY\le c\abs f\right)^n- P\left(\bfeta\le c\abs f\right)}\\
  &=\sup_{f\in\bar E^-_1[0,1]} \sup_{c < 0}\abs{P\left(\bfY\le c\abs f\right)^n- P\left(\bfeta\le c\abs f\right)}.
 \end{align*}
 The assertion now follows by repeating the arguments in the proof of Theorem 1.1 in Falk and Reiss \cite{falr02}, where the bivariate case has been established.
   \end{proof}

\section*{Acknowledgment}
The second author gave an invited talk on an earlier version of
this paper at the  meeting 'On Some Current Research Topics in
Extreme Value Theory', organized by the FCT/MCTES Research Project
\textit{Spatial Extremes} and CEAUL on December 13, 2010,
University of Lisbon, Portugal. The authors are indebted to the
organizers, Ana Ferreira and Laurens de Haan, for the constructive
discussion from which the present version of the paper has
benefitted a lot.

\bibliographystyle{plainnat}
\bibliography{evt}

\end{document}